\newcommand{\median}{\mathrm{median}}
\newcommand{\mat}[1]{{#1}}
\newcommand{\vect}[1]{\mathbf{#1}}
\newcommand{\norm}[1]{\left\|#1\right\|}
\newcommand{\abs}[1]{\left|#1\right|}
\newcommand{\expect}[1]{\mathbf{E}\left[#1\right]}
\newcommand{\prob}{\mathbb{P}}
\newcommand{\holder}{H\"{o}lder }
\newcommand{\variance}{\textbf{Var}}
\newtheorem{thm}{Theorem}
\newtheorem{lem}{Lemma}
\newtheorem{cor}{Corollary}
\newtheorem{prop}{Proposition}
\newtheorem{defn}{Definition}
\newtheorem{rem}{Remark}
\renewcommand{\hat}{\widehat}
\renewcommand{\tilde}{\widetilde}
\renewcommand{\bar}{\overline}
\renewcommand{\check}{\widecheck}
\renewcommand{\triangle}{\Delta}
\title{Robust Nonparametric Regression under \\Huber's $\epsilon$-contamination Model}
\author[1]{Simon S. Du}
\author[1]{Yining Wang}
\author[2]{Sivaraman Balakrishnan}
\author[1]{Pradeep Ravikumar}
\author[1]{\qquad \qquad Aarti Singh}
\affil[1]{Machine Learning Department, Carnegie Mellon University}
\affil[2]{Department of Statistics, Carnegie Mellon University}
\begin{document}

\maketitle

\begin{abstract}
	\label{sec:abs}
	We consider the non-parametric regression problem under Huber's $\epsilon$-contamination model, 
in which an $\epsilon$ fraction of observations are subject to arbitrary adversarial noise.
We first show that a simple local binning median step can effectively remove the adversary noise and this median estimator  is minimax optimal up to absolute constants over the \holder function class with smoothness parameters smaller than or equal to 1.
Furthermore, when the underlying function has higher smoothness, we show that using local binning median as pre-preprocessing step to remove the adversarial noise, then we can apply any non-parametric estimator on top of the medians.
In particular we show local median binning followed by kernel smoothing and  local polynomial regression achieve minimaxity over \holder and Sobolev classes with arbitrary smoothness parameters.
Our main proof technique is a decoupled analysis of adversary noise and stochastic noise, which can be potentially applied to other robust estimation problems.
We also provide numerical results to verify the effectiveness of our proposed methods.
\end{abstract}

\section{Introduction}
\label{sec:intro}
Nonparametric regression has a wide range of applications in statistics and machine learning research~\citep{larry2006all,tsybakov2009introduction,friedman2001elements}.
In this paper we restrain ourselves to the fixed design setting, 
where design points $x_\vect{i} = \left[\frac{i_1}{p},\ldots,\frac{i_d}{p}\right]$, $i_1,i_2,\ldots,i_d=1,\cdots,p$ are evenly spaced on $[0,1]^d$ and in total we have $n = p^d$ design points.
In the standard setting, we observe
$
y_\vect{i} = f(x_\vect{i}) + \xi_\vect{i}
$
where $f:[0,1]^d\to\mathbb R$ is an underlying function to be estimated
and $\{\xi_\vect{i}\}$ are i.i.d.~noise variables.
The objective is to construct an estimate $\hat{f}$ that is close to $f$ under certain error metric like the mean square error:
$
\|\hat f-f\|_2^2 := \int_{[0,1]^d}\abs{\hat{f}(x)-f(x)}^2dx.
$ 


The nonparametric regression problem has a long history of study, dating back to the 1920s~\citep{whittaker1922new}.
A large family of methods have been developed and their properties analyzed,
including kernel smoothing \citep{friedman2001elements,gyorfi2006distribution},
spline smoothing~\citep{reinsch1967smoothing,geer2000empirical,green1993nonparametric},
wavelet smoothing~\citep{donoho1998minimax,donoho1994ideal,hardle2012wavelets}
and local regression methods \citep{fan1992variable,fan1993local,fan1996local}.
We refer the readers to the excellent books of \citep{gyorfi2006distribution,tsybakov2009introduction,larry2006all}
for the comprehensive literature on the nonparametric regression problem.

In many real world applications, the observations may subject to systematic or even adversarial noise.
Classically, the sensitivity of conventional statistical procedures to outliers was noted by~\citet{tukey1975mathematics} who observed that estimators like the empirical mean can be sensitive to even a single gross outlier.
In the nonparametric setting, similarly, the adversary noise can completely break these nonparametric estimators.

The formal study of robust estimation was initiated by~\citet{huber1964robust,huber1965robust},
who considered the $\epsilon$-contamination model where the observations are distributed from the mixture:
\[
y_{\vect{i}} \overset{i.i.d.}{\sim} (1-\epsilon) P(x_{\vect{i}}) + \epsilon Q(x_{\vect{i}}).
\]
Here $P\left(\vect{x}_\vect{i}\right) = N(f(x_{\vect{i}}),1)$, Gaussian distribution with mean $f(x_{\vect{i}})$ and variance $1$, $Q(x_{\vect{i}})$ is an arbitrarily adversary distribution and $\epsilon$ is the expected fraction of adversarial outliers.
\footnote{The exact Gaussianity of $P$ is not required and our analysis generalizes to other sub-Gaussian benign noise distributions as well.}

The performance of robust estimators can be evaluated under a \emph{minimax} framework recently formulated in the statistics literature~\citep{chen2015robust,chen2016general,gao2017robust,liu2017density}. 
Typically, the minimax rates under the Huber's $\epsilon$-contamination model can be divided into two components: 
(1) The \emph{contamination dependence}, which concerns the dependence of the estimation error on the contamination parameter $\epsilon$;
(2) The \emph{statistical rate}, which coincides with the classical notion of statistical consistency and goes to zero as we accumulate more data (i.e., $n\to\infty$).
We say a robust estimator is \emph{minimax optimal} if both its contamination dependence 
and statistical rate match their information-theoretical limits, up to absolute constants.


\paragraph{Our contributions:}
In this paper, we present a simple and easy-to-compute local binning median method which can effectively remove all the adversarial noise.
Theoretically, this method is statistically optimal in terms of both contamination dependency and statistical rate up to absolute constant for estimating functions in \holder class with smoothness parameter $\beta\in[0,1]$ (a formal definition of \holder function class is given in Sec.~\ref{subsec:holder}).

Furthermore, we propose a generic method for estimating functions with higher orders of smoothness.
Our method has two steps. 
First, we use a local binning median estimator to remove adversarial noise.
Next, we take a non-parametric procedure on top of these medians where the specific choice of the procedure depends on user's knowledge of the underlying function.
In particular, we show that if we use kernel smoothing or local polynomial regression, we can achieve minimaxity for \holder and Sobolev function classes with higher orders of smoothness.
We remark that both steps can be computed efficiently in polynomial time.

To our knowledge, these are the first computationally efficient algorithms for nonparametric regression under the Huber's $\epsilon$-contamination model with provably optimal contamination dependency and statistical rate.

\subsection{Related work}
\label{sec:related}

Some of the classical literature on robust statistics focused on the design of robust estimators and the study of their statistical properties (see, for instance, the works of ~\citet{huber2011robust,hampel2011robust}).
The major drawback of many of these classical robust estimators is that they are either heuristic in nature
(for instance, methods based on Winsorization~\citep{hastings1947low} are generally not optimal in
the minimax sense), or are computationally intractable (for instance, methods based on Tukey’s depth~\citep{tukey1975mathematics} or on $\ell_1$ tournaments~\citep{yatracos1985rates}).

In the nonparametric setting, most papers focused on density estimation problems~\citep{acharya2017sample,chan2014near,diakonikolas2016efficient,daskalakis2012learning,liu2017density}.
On the other hand, existing works on robust nonparametric regression only focused on heavy-tailed noise~\citep{fan1994robust}, but not Huber's $\epsilon$-contamination model.
To our knowledge, only~\citet{gao2017robust} considered nonparametric regression problems in this model.
This estimator is based on the concept of \emph{regression depth}, a generalization of Tukey's depth and achieves optimal contamination dependency and statistical rate for Sobolev function class.
Unfortunately, there is no known polynomial time algorithm to compute such estimators.

Methodologically, our proposed algorithms also resemble the two-step method considered in \citep{brown2008robust}.
However, their analysis is based on asymptotic equivalence~\citep{cai2009asymptotic} and cannot be adapted to Huber's $\epsilon$-contamination model.
Instead, we propose a decoupled analysis and only use elementary concentration inequalities to prove the upper bound of this estimator.
Given the importance of Huber's $\epsilon$-contamination model, we believe our proof techniques can also be used in other robust statistical estimation problems.

Recent works from the theoretical computer science community proposed new methods for robust statistical estimation with polynomial running time guarantees.
\citet{diakonikolas2016robust,lai2016agnostic,charikar2017learning} provide some of the first computationally tractable, provably robust estimators
with near-optimal contamination dependence in a variety of settings.
More recently, generalization to high-dimensional setting~\citep{balakrishnan2017computationally} is also studied.
However, these works only focused on the parametric models.

\subsection{Function Classes}\label{subsec:holder}
The \holder class is a popular choice of smoothness classes for nonparametric estimation problems.
The following definition gives a rigorous mathematical formulation of \holder classes considered in this paper:
\begin{defn}
\label{defn:holder}
For $f: \left[0,1\right]^d \rightarrow \mathbb{R}$, if $f\in \Lambda\left(\beta,L\right)$, then it satisfies for any $x,x' \in \left[0,1\right]^d$\begin{align*}
\sum_{j=0}^{\ell}\sum_{\alpha_1+\cdot+\alpha_d=j}\abs{f^{(\vect{\alpha},j)}(x)} + \sum_{\alpha_1+\cdots+\alpha_d=k}\frac{\abs{f^{(\vect{\alpha},k)}(x)-f^{(\vect{\alpha},k)}(x')}}{\norm{x-x'}_\infty^{\beta-\ell}} \le L
\end{align*}
where $\ell = \lfloor \beta \rfloor$ and $f^{(\vect{\alpha},j)(x)} = \partial^j f(x)/\partial x_1^{\alpha_1}\ldots\partial x_d^{\alpha_d}$.
\end{defn}
At a higher level, functions belonging to the \holder class have their derivatives and derivative changes \emph{uniformly} bounded on the unit interval $[0,1]^d$.

While \holder class imposes uniform smoothness condition, Sobolev class only assumes the averaged magnitudes of the derivatives are bounded.
Here we adopt the definition from~\cite{nemirovski2000topics}.
\begin{defn}
\label{defn:sobolev}
	For $f: \left[0,1\right]^d \rightarrow \mathbb{R}$, if $f\in \Sigma\left(\beta,p,L\right)$, it satisfies\begin{align*}
\norm{D^\beta f}_p \le L
	\end{align*}
	where $\beta \ge 1$, $p \ge d$ are integers and $D^\beta f (\cdot)$is the vector function comprised of all partial derivatives (in terms of distributions) of f with order $\beta$.
\end{defn}





\section{Some Natural Approaches and Their Problems}
\label{sec:naive}
In this section we discuss some natural approaches and their problems.
For the ease of presentation we focus on the one dimensional setting.

\paragraph{Example I: Direct Kernel Smoothing}
We first consider directly applying kernel smoothing estimator to this problem.
Let $x_0$ be a query point, recall the kernel smoothing method has the following weighted sum form :$
\hat{f}(x_0) = \sum_{i=1}^{n}\frac{1}{h}K_i^h(x_0)y_i.
$ where $K_i^h(\cdot)$ is a kernel function and $h$ is the band-width.
Consider the case that there exists some $i$ such that $K_i^h(x_0) > 0$ and $\xi_i \sim Q$, the noise is sampled from the adversarial distribution.
Because $Q$ is arbitrary, $\xi_i$ can be infinity and in turn $y_i$ is infinity.
Therefore, in this situation our estimator will output infinity which is undesirable.
One may think this event happens with low probability.
However, note for every observation $y_i$ we have $\epsilon$ probability that $\xi_i \sim Q$.
Thus it is not hard to show with high probability that there exists one $i$ such $K_i^h(x_0) > 0$ and $y_i = \infty$.

\paragraph{Example II: Truncated Kernel Smoothing}
A natural to deal with the extremely large adversarial noise is to use truncation.
For \holder class, since we know $\norm{f}_\infty \le L$ and $P$ is standard Gaussian distribution.
We can modify the kernel smoothing estimator as $
\hat{f}(x_0) = \sum_{i=1}^{n}\frac{1}{h}K_i^h(x_0)\bar{y}_i
$ where \begin{align*}
	\bar{y}_i = \begin{cases}
	L+c \text{ if } y_i \ge L + c\\
	y_i \text{ if } -(L+c\sigma) \le  y_i \le L + c\\
	-(L+c) \text{ if } y_i \le -(L+c)
	\end{cases}.
\end{align*}
Here we choose $c > 1$ to ensure most samples whose noise are from $P$ are not being truncated~\citep{diakonikolas2016robust}.
This estimator rules out the infinite issue.
However, consider the following scenario.
Let $x_0$ be a query point and $f(x_0)=0$.
Note around $x_0$ there are approximately $\epsilon$ adversarial observations and if we set the adversarial distribution to be a unit mass on the infinity, then there are approximately $\epsilon$ fraction of $\bar{y}_i$s are $L+c$.
These points will incur an $\Omega\left(\epsilon^2 (L+1)^2\right)$ squared bias in our estimation.
In Section~\ref{sec:lower_bound}, we will show the optimal lower bound for the contamination dependency is $\Omega\left(\epsilon^2\right)$ and it  does not depend on $L$.
When $L$ is relatively large, this estimation will be far off from the truth.
We also verify this phenomenon by simulation in Section~\ref{sec:exp}.

\section{Local Binning Median Estimator}
\label{sec:lbm}
We introduce the local binning estimator and its theoretical properties in this section.
The interval $\left[0,1\right]$ is evenly divided into $m$ sub-intervals.
The bin $\vect{j} =\left(j_1,\ldots,j_d\right)$ corresponds to the $d$-dimensional box $[\frac{j_1-1}{m}, \frac{j_1}{m}) \times \cdots \times [\frac{j_d-1}{m}, \frac{j_d}{m}) $ and has $s = \frac{n}{m^d}$ design points.
\footnote{For simplicity of presentation, we assume $m$ and $s$ are both integers. 
It is straightforward to generalize to the case where $n$ is not a multiple of $s$.}
For a query point $x_0\in[0,1]^d$, let $\vect{j}\in[m]$ be the label of the unique bin that contains $x_0$.
The local binning median estimator evaluated at $x_0$ is then defined as
\[
\hat{f}\left(x_0\right) := \median\left\{y_\vect{i}\right\}_{\vect{i} \in \text{ bin } \vect{j}}.
\] where $\vect{i} = \left(i_1,\ldots,i_d\right)$ is the index.
It is clear that $\hat f(x_0)$ is easily computed in time $O(s)$, as the median can be computed in linear time (see, e.g., \citet{kleinberg2006algorithm}).

Before delving into detailed statistical properties $\hat f$, we provide motivations for both the median operator and the local binning approach taken in $\hat f$.
\paragraph{\emph{The median}.}
Median is widely used in robust estimation problems for dealing with adversarial noise~\citep{huber2011robust}, heavy tail noise~\citep{brown2008robust}, etc.
There are two fundamentally useful properties about median.
First, taking the median ensures that, as long as the majority of the samples are good, the output will not be affected by the small amount of outliers.
In addition, the median itself has an concentration effect similar to what the mean does.
Our proof heavily relies on these two properties. 

\paragraph{\emph{The local binning}.}
Binning is used to exploit the smoothness of the function class.
Since the binning estimator only uses observations near the query point, for smooth function class, this estimator will incur only a small bias.
This is the same idea in kernel smoothing where we often put more weights on points that are near the query point.

We are ready ready to state our main lemma which characterizes the performance of this local binning median estimator.
\begin{lem}\label{lem:meta_lemma_uniform_mult}
Suppose $\epsilon \le 1/4$.
Let $s = \frac{n}{m^d}$ and $z_\vect{j} = \median\left\{y_\vect{i}\right\}_{\vect{i} \in \text{ bin } \vect{j}}$.
With probability at least 0.9, for any $\vect{j} \in [m]^d$, we have \begin{align*}
		z_\vect{j} = f\left(\frac{\vect{j}}{m}\right) + \triangle_j + \eta_j
	\end{align*} where $\abs{\triangle_\vect{j}} \le \max_{x \in \text{ bin }\vect{j}}\abs{f\left(x\right)-f\left(\frac{\vect{j}}{m}\right)}$, $\abs{\expect{\eta_\vect{j}}} \le C\left(\epsilon + \frac{\log m}{s}\right)$ and $\variance\left(\eta_\vect{j}\right) \le \frac{C}{s}$.
\end{lem}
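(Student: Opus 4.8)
The plan is to \emph{decouple} the adversarial contamination from the benign Gaussian noise: first isolate, bin by bin, the influence of the corrupted observations, and then argue that what remains is essentially a near‑median order statistic of i.i.d.\ Gaussians, which is tightly concentrated. I would realize the model as $y_\vect{i}=(1-B_\vect{i})\bigl(f(x_\vect{i})+\zeta_\vect{i}\bigr)+B_\vect{i}a_\vect{i}$ with $B_\vect{i}\sim\bern(\epsilon)$, $\zeta_\vect{i}\sim\nml(0,1)$ and $a_\vect{i}\sim Q(x_\vect{i})$ mutually independent, and let $k_\vect{j}=\sum_{\vect{i}\in\text{ bin }\vect{j}}B_\vect{i}\sim\mathrm{Binomial}(s,\epsilon)$. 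A Chernoff bound plus a union bound over the $m^d$ bins yields a \emph{good event} of probability at least $0.9$ on which every $k_\vect{j}\le\epsilon s+C'\bigl(\sqrt{\epsilon s\log m}+\log m\bigr)$; since $\epsilon\le 1/4$ (and, in the regime of interest, $s$ exceeds a fixed multiple of $\log m$) this forces $k_\vect{j}<s/2$, while AM--GM on the middle term gives $k_\vect{j}/s\le C''(\epsilon+\log m/s)$. All later statements are made on this event and conditionally on the corruption configuration $\mathcal{C}=(\{B_\vect{i}\},\{a_\vect{i}\})$; conditionally, the benign observations in bin $\vect{j}$ are $s-k_\vect{j}$ independent copies of $f(x_\vect{i})+\zeta_\vect{i}$, with the $\zeta_\vect{i}$ still i.i.d.\ $\nml(0,1)$.

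The next ingredient is robustness of the median, phrased as an order‑statistic sandwich. With fewer than $s/2$ corrupted points in bin $\vect{j}$, an elementary counting argument shows that $z_\vect{j}$ always lies between the $(\lceil s/2\rceil-k_\vect{j})$‑th and the $\lceil s/2\rceil$‑th smallest of the $s-k_\vect{j}$ benign observations (the window widens by one for even $s$): driving all corrupted values to $+\infty$ realizes the upper endpoint, to $-\infty$ the lower one, and any intermediate placement falls in between. Since each benign value equals $f(x_\vect{i})+\zeta_\vect{i}$ with $|f(x_\vect{i})-f(\vect{j}/m)|\le B_\vect{j}:=\max_{x\in\text{ bin }\vect{j}}|f(x)-f(\vect{j}/m)|$, monotonicity of order statistics under $f(\vect{j}/m)-B_\vect{j}+\zeta_\vect{i}\le f(x_\vect{i})+\zeta_\vect{i}\le f(\vect{j}/m)+B_\vect{j}+\zeta_\vect{i}$ gives
\begin{align*}
f(\vect{j}/m)-B_\vect{j}+\zeta_{(\lceil s/2\rceil-k_\vect{j})}\;\le\;z_\vect{j}\;\le\;f(\vect{j}/m)+B_\vect{j}+\zeta_{(\lceil s/2\rceil)},
\end{align*}
where $\zeta_{(1)}\le\cdots\le\zeta_{(s-k_\vect{j})}$ are the order statistics of $s-k_\vect{j}$ i.i.d.\ standard Gaussians.

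I would then define the decomposition through a conditional expectation. Set $\mu_\vect{j}:=\E[z_\vect{j}-f(\vect{j}/m)\mid\mathcal{C}]$, let $\triangle_\vect{j}$ be the projection of $\mu_\vect{j}$ onto $[-B_\vect{j},B_\vect{j}]$, and put $\eta_\vect{j}:=z_\vect{j}-f(\vect{j}/m)-\triangle_\vect{j}$, so $|\triangle_\vect{j}|\le B_\vect{j}$ holds by construction. Taking conditional expectations in the sandwich (the benign Gaussians are conditionally i.i.d., so $\E[\zeta_{(r)}\mid\mathcal{C}]$ is the $r$‑th order statistic expectation of $s-k_\vect{j}$ i.i.d.\ $\nml(0,1)$) and using the standard fact that a near‑median order statistic $\zeta_{(r)}$ of $N$ i.i.d.\ Gaussians satisfies $\E\zeta_{(r)}=O\bigl((r-N/2)/N+1/N\bigr)$ — here the two relevant indices sit $O(k_\vect{j})$ away from the center of the $s-k_\vect{j}$ benign points — I get $\mu_\vect{j}\in[-B_\vect{j}-O(k_\vect{j}/s+1/s),\,B_\vect{j}+O(k_\vect{j}/s+1/s)]$. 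Hence the clipped‑off amount, which is exactly $|\E[\eta_\vect{j}\mid\mathcal{C}]|=|\mu_\vect{j}-\triangle_\vect{j}|$, is $O(k_\vect{j}/s+1/s)\le C(\epsilon+\log m/s)$ on the good event. Since $\triangle_\vect{j}$ is $\mathcal{C}$‑measurable, $\variance(\eta_\vect{j}\mid\mathcal{C})=\variance(z_\vect{j}\mid\mathcal{C})$, so only a variance bound for the median itself remains.

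To bound $\variance(z_\vect{j}\mid\mathcal{C})=O(1/s)$ I would show $z_\vect{j}$ concentrates: let $\bar{F}_\vect{j}(t):=\tfrac1{s-k_\vect{j}}\sum_{\vect{i}\ \mathrm{benign}}\Phi(t-f(x_\vect{i}))$ be the mean CDF of the benign observations and $m_\vect{j}$ its median; then $z_\vect{j}>m_\vect{j}+t$ forces $\#\{\text{benign}\le m_\vect{j}+t\}<\lceil s/2\rceil$, a deviation of a sum of $s-k_\vect{j}$ independent Bernoullis from its mean $(s-k_\vect{j})\bar{F}_\vect{j}(m_\vect{j}+t)$, and because the density of $\bar{F}_\vect{j}$ on $[m_\vect{j},m_\vect{j}+t]$ is at least $\phi(2B_\vect{j})=\Omega(1)$ — using that $f$ is bounded so $B_\vect{j}=O(1)$, the constant being absorbed — this mean already exceeds $\lceil s/2\rceil$ by $\Omega(st)$; Bernstein's inequality then gives $\prob(z_\vect{j}>m_\vect{j}+t\mid\mathcal{C})\le 2e^{-cst^2}$ for $t=O(1)$, with faster sub‑Gaussian decay beyond. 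The lower tail is symmetric, so integrating yields $\E[(z_\vect{j}-m_\vect{j})^2\mid\mathcal{C}]=O(1/s)$ and hence the claimed variance bound; all three estimates then hold simultaneously for every $\vect{j}$ on the good event. I expect the main obstacle to be this variance step: one must honestly control a near‑median order statistic of a \emph{non‑identically distributed} family (the within‑bin variation of $f$) and rule out the degenerate case where the benign values split into far‑apart clusters, which would make the empirical median ill‑conditioned — exactly where boundedness of $f$, via $B_\vect{j}=O(1)$ lower‑bounding the density at the median, is used. A related subtlety, handled above, is that $\triangle_\vect{j}$ must be defined through the conditional expectation rather than as a naive clip of $z_\vect{j}-f(\vect{j}/m)$, so that the $O(1/\sqrt{s})$‑scale sampling fluctuation of the median is charged to $\variance(\eta_\vect{j})$ and not to $\E[\eta_\vect{j}]$.
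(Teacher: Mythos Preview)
Your overall plan matches the paper's: Chernoff plus a union bound to control the per-bin contamination counts $k_\vect{j}$ uniformly, an order-statistic sandwich to trap the bin median between near-median benign quantiles, and then read off bias and variance from Gaussian order-statistic facts. The paper organizes the decomposition more simply, though: it sets $\eta_\vect{j}:=\median\{\xi_\vect{i}\}_{\vect{i}\in\text{bin }\vect{j}}$ with $\xi_\vect{i}=y_\vect{i}-f(x_\vect{i})$ the \emph{raw} noise, so the benign $\xi_\vect{i}$ are exactly i.i.d.\ $\nml(0,1)$ with no $f(x_\vect{i})$ shift, and then $\triangle_\vect{j}:=z_\vect{j}-f(\vect{j}/m)-\eta_\vect{j}$ is bounded in one line by the sandwich $\min_i a_i\le\median\{a_i+b_i\}-\median\{b_i\}\le\max_i a_i$ applied with $a_i=f(x_\vect{i})-f(\vect{j}/m)$ and $b_i=\xi_\vect{i}$. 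This sidesteps your clipping-of-the-conditional-mean construction and reduces the moment analysis of $\eta_\vect{j}$ to the median of i.i.d.\ standard Gaussians plus $k_\vect{j}$ fixed contaminants, which is the textbook setting.

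Your variance step has a concrete gap. You center at $m_\vect{j}$, the median of the benign-only CDF $\bar F_\vect{j}$, and assert that $(s-k_\vect{j})\bar F_\vect{j}(m_\vect{j}+t)$ exceeds $\lceil s/2\rceil$ by $\Omega(st)$. But $(s-k_\vect{j})\bar F_\vect{j}(m_\vect{j})=(s-k_\vect{j})/2$, which already sits $k_\vect{j}/2$ \emph{below} $\lceil s/2\rceil$; hence the gap $c(s-k_\vect{j})t-k_\vect{j}/2$ is negative for all $t<\Omega(k_\vect{j}/s)$, and your Bernstein bound is vacuous in that range. Integrating the tail then only yields $\E[(z_\vect{j}-m_\vect{j})^2\mid\mathcal C]=O\bigl((k_\vect{j}/s)^2+1/s\bigr)$, so on the good event you get $\variance(\eta_\vect{j}\mid\mathcal C)\le C(\epsilon^2+1/s)$, not the claimed $C/s$. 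The underlying reason is that $z_\vect{j}$ does not concentrate around $m_\vect{j}$: its actual concentration point is shifted by up to $O(k_\vect{j}/s)$ depending on where the adversarial values sit relative to the benign bulk. The fix is to center at the $\mathcal C$-dependent point $t^*$ solving $(s-k_\vect{j})\bar F_\vect{j}(t^*)+\#\{a_\vect{i}\le t^*\}=\lceil s/2\rceil$; then the expected-count gap is at least $c(s-k_\vect{j})t$ for \emph{every} $t>0$, your Bernstein argument applies uniformly, and $\variance(z_\vect{j}\mid\mathcal C)=O(1/s)$ follows. Alternatively, adopting the paper's decomposition makes the benign part identically distributed and lets you cite the classical $O(1/s)$ variance of near-median Gaussian order statistics directly.
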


Lemma~\ref{lem:meta_lemma_uniform_mult} shows a form of bias-variance trade-off.
$\abs{\triangle_{\vect{j}}}$ characterizes the bias incurred by our binning step.
This term is small if the function we are estimating has smoothness property.
$\abs{\expect{\eta_{\vect{j}}}}$ characterizes the biased incurred by the adversary and stochastic noise.
Later in Section~\ref{sec:lower_bound}, we will show that $\epsilon$ is unavoidable in Huber's $\epsilon$-contamination model.
The $\frac{\log m}{s}$ is small if $s$ is relatively large.
Lastly, $\variance\left(\eta_\vect{j}\right)$ is the variance incurred by the stochastic noise.
Note we have $\variance\left(\eta_\vect{j}\right) = O\left(\frac{1}{s}\right)$ because the variance of the median of $s$ sub-Gaussian random variable is $O\left(\frac{1}{s}\right)$.

With this meta Lemma at hand, we can direct derive upper bounds for some particular function classes.

\begin{thm}[Estimation Error Bound for \holder Function Class]\label{thm:l2_upper_bound}
Suppose $f\in\Lambda(\beta,L)$ for some $\beta\in(0,1]$ and $m\asymp n^{\frac{1}{2\beta+d}}L^{\frac{2}{2\beta+d}}$.
Then there exists an absolute constant $C>0$ such that with probability at least 0.9, 
\begin{align*}
\norm{\hat{f}-f}_2^2 \le C\left(L^{\frac{2d}{2\beta+d}}n^{-\frac{2\beta}{2\beta+d}}+\epsilon^2\right).
	\end{align*} 
\end{thm}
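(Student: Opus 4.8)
The plan is to reduce everything to Lemma~\ref{lem:meta_lemma_uniform_mult} and then integrate the per-bin error over the $m^d$ bins. Since $\hat f$ is piecewise constant, taking the value $z_{\vect j}$ on bin $\vect j$,
\begin{align*}
\norm{\hat f - f}_2^2 = \sum_{\vect j\in[m]^d}\int_{\text{bin }\vect j}\abs{z_{\vect j}-f(x)}^2\,\ud x .
\end{align*}
Working on the probability-$0.9$ event furnished by Lemma~\ref{lem:meta_lemma_uniform_mult}, I would write $z_{\vect j}-f(x) = \big(f(\tfrac{\vect j}{m})-f(x)\big)+\triangle_{\vect j}+\eta_{\vect j}$ and use $(a+b+c)^2\le 3(a^2+b^2+c^2)$, so the contribution of bin $\vect j$ is at most $3m^{-d}\big(\max_{x\in\text{bin }\vect j}\abs{f(x)-f(\tfrac{\vect j}{m})}^2+\triangle_{\vect j}^2+\eta_{\vect j}^2\big)$; it then remains to bound the three pieces.

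For the first two (deterministic, ``bias'') pieces I would invoke the smoothness assumption. For $\beta\in(0,1]$ one checks directly from Definition~\ref{defn:holder} that $\abs{f(x)-f(x')}\le L\norm{x-x'}_\infty^\beta$ for all $x,x'\in[0,1]^d$: for $\beta<1$ this is the \holder condition with $\ell=0$, while for $\beta=1$ it follows from $\norm{\nabla f}_1\le L$ and the mean value theorem. Since every point of bin $\vect j$ lies within $\ell_\infty$-distance $1/m$ of $\tfrac{\vect j}{m}$, both $\max_{x\in\text{bin }\vect j}\abs{f(x)-f(\tfrac{\vect j}{m})}$ and, by the bound in Lemma~\ref{lem:meta_lemma_uniform_mult}, $\abs{\triangle_{\vect j}}$ are at most $Lm^{-\beta}$. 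Summed over the $m^d$ bins of volume $m^{-d}$, these two pieces contribute $O(L^2 m^{-2\beta})$.

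For the stochastic piece I need a high-probability bound on $\frac{1}{m^d}\sum_{\vect j}\eta_{\vect j}^2$. Using the moment estimates from Lemma~\ref{lem:meta_lemma_uniform_mult},
\begin{align*}
\expectation\!\left[\eta_{\vect j}^2\right] = \variance(\eta_{\vect j})+\big(\expectation\eta_{\vect j}\big)^2 \le \frac{C}{s}+2C^2\Big(\epsilon^2+\frac{\log^2 m}{s^2}\Big),
\end{align*}
so the same bound holds for $\expectation[\frac{1}{m^d}\sum_{\vect j}\eta_{\vect j}^2]$ and Markov's inequality gives $\frac{1}{m^d}\sum_{\vect j}\eta_{\vect j}^2 = O\big(\tfrac1s+\epsilon^2+\tfrac{\log^2 m}{s^2}\big)$ on an event of probability, say, $0.95$; tuning the constant in Lemma~\ref{lem:meta_lemma_uniform_mult} so its event has probability $\ge 0.95$, the intersection of the two events still has probability $\ge 0.9$. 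Combining the pieces and substituting $s=n/m^d$,
\begin{align*}
\norm{\hat f-f}_2^2 \le C'\Big(L^2 m^{-2\beta}+\frac{m^d}{n}+\epsilon^2+\frac{m^{2d}\log^2 m}{n^2}\Big).
\end{align*}
Finally I would plug in $m\asymp n^{1/(2\beta+d)}L^{2/(2\beta+d)}$, which is exactly the value balancing $L^2 m^{-2\beta}$ against $m^d/n$; both then equal $L^{2d/(2\beta+d)}n^{-2\beta/(2\beta+d)}$, while the last term equals $(\log^2 m/s)\cdot(m^d/n)$ and is of smaller order for $n$ large under this scaling, which gives the claimed bound.

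The step I expect to be the main obstacle is the transition from the per-bin, in-expectation control of $\eta_{\vect j}$ supplied by Lemma~\ref{lem:meta_lemma_uniform_mult} to the simultaneous, high-probability control of the averaged squared residuals $\frac{1}{m^d}\sum_{\vect j}\eta_{\vect j}^2$ --- one must make sure the moment bounds on $\eta_{\vect j}$ are available unconditionally (or after conditioning on the good event), and keep careful track of failure probabilities and absolute constants so that the final statement holds with probability $0.9$. The remaining ingredients, namely the reduction of the \holder condition for $\beta\le 1$ to the pointwise modulus bound and the arithmetic balancing the bias and variance exponents, are routine.
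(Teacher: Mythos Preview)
Your proposal is correct and follows essentially the same route as the paper: decompose $z_{\vect j}-f(x)$ into the two deterministic bias pieces and the stochastic piece $\eta_{\vect j}$, apply $(a+b+c)^2\le 3(a^2+b^2+c^2)$, bound the bias pieces by $Lm^{-\beta}$ via the \holder condition, control $\eta_{\vect j}^2$ through the moment bounds of Lemma~\ref{lem:meta_lemma_uniform_mult}, integrate over bins, and balance by the stated choice of $m$. The paper's write-up passes through $\expect{(z_{\vect j}-f(x))^2}$ directly and is somewhat looser about the high-probability conversion, whereas you spell out the Markov step for $\tfrac{1}{m^d}\sum_{\vect j}\eta_{\vect j}^2$ and the interplay of the two good events; this is a presentational difference, not a methodological one.
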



\begin{rem}
Though Theorem \ref{thm:l2_upper_bound} seems to appear only to \holder class with $\beta\leq 1$, one should note that 
$\Lambda(\beta,L)\subseteq\Lambda(\beta',L)$ always holds for any $\beta\geq\beta'$.
Therefore, in cases where $\beta>1$, the scaling of $m$ and the upper bound on $\|\hat f-f\|_2^2$
remain valid with $\beta$ replaced by $\beta'=\min\{\beta,1\}$.
\end{rem}

\begin{rem}
Theorem \ref{thm:l2_upper_bound} suggests two potential sources of error in $\|\hat f-f\|_2^2$.
The first term $L^{2d/(2\beta+d)}n^{-2\beta/(2\beta+d)}$ is the classical statistical rate in nonparametric regression and goes to 0 as $n$ increases to infinity.
The second term $\epsilon^2$ represents the contamination dependency and typically remains constant under large-$n$ asymptotic regimes.
Importantly, this term does \emph{not} depend on the \holder smooth constant $L$.
\end{rem}


\section{Post-processing to Exploit Higher Smoothness}
\label{sec:postprocessing}
While the local binning median estimator $\hat f$ is intuitive, the number of bins $m\asymp n^{\frac{1}{1+2\beta'}}L^{\frac{2}{2\beta'+1}}$ for $\beta'=\min\{\beta,1\}$
is likely to be too large for smoother \holder functions where $\beta>1$ or Sobolev functions.
This leads to under-smoothing and loss of statistical efficiency.
The main reason is a simple local median step cannot exploit the smoothness in the derivatives of the underlying function (c.f. Definition~\ref{defn:holder} and Definition~\ref{defn:sobolev}).

In this section we show a simple post-processing step can exploit this higher smoothness property.
Our main observation is from Lemma~\ref{lem:meta_lemma_uniform_mult}.
Recall we have with high probability
\begin{align*}
		z_\vect{j} = f\left(\frac{\vect{j}}{m}\right) + \triangle_\vect{j} + \eta_\vect{j}.
\end{align*}
We view $z_{\vect{j}}$ as a noisy observation of $f\left(\frac{\vect{j}}{m}\right)$ where the noise is $(\triangle_\vect{j} + \eta_\vect{j})$ which has bounded bias $O(\epsilon+\frac{\log m}{s})$ and variance $\frac{1}{s}$.
Therefore we can take any algorithm that works for non-parametric problem under stochastic noise with bounded bias.
Formally, we define a non-parametric algorithm acting on $n$ samples as a map\begin{align}
\mathcal{A}: 
\mathcal{A}\left(\left\{y_i\right\}_{i=1}^n, x_0\right) \rightarrow \check{f}\left(x_0\right) \label{eqn:nonparam_algo}
\end{align}
where $x_0$ is a query point and $\left\{y_i\right\}_{i=1}^n$ are observations.
The final performance will depend on how the noise affect the algorithm.
This abstract procedure is summarized in Algorithm~\ref{algo:post_processing}

\begin{algorithm}[tb]
	\caption{The local binning median estimator with  post-processing}
	\label{algo:post_processing}
	\begin{algorithmic}[1]
		\STATE \textbf{Input}: $\left\{y_{\vect{i}}\right\}_{i=1}^n$, number of bins $m$, $s=\lfloor n/m^d\rfloor$, a nonparametric algorithm $\mathcal{A}$ defined in Equation~\eqref{eqn:nonparam_algo}, query point $x_0$.
		\STATE \textbf{Output}: estimated function value $\check{f}(x_0)$.
		\FOR{$\vect{j} = [m]^d$}
		\STATE  Compute $z_\vect{j} = \median \left\{y_\vect{i}\right\}_{i \in \text{ bin }\vect{j}}$.
		\ENDFOR    
		\STATE Compute $\check{f}\left(x_0\right) = \mathcal{A}\left(\left\{\vect{z}_\vect{j}\right\}_{\vect{j} \in [m]^d},x_0\right)$
	\end{algorithmic}
\end{algorithm}

Now we use two specific non-parametric estimators to illustrate this idea.

\subsection{Post-processing by kernel smoothing}
\label{sec:postprocessing_kernl}
We first choose the non-parametric estimation $\mathcal{A}$ to be kernel smoothing for one dimensional robust non-parametric estimation to illustrate the usage of Algorithm~\ref{algo:post_processing}.
This estimator which significantly improves the statistical efficiency of $\hat f$ for highly smooth functions in the \holder class.
Let $K:\mathbb R\to\mathbb R^*$ be a kernel function selected by the data analyzer.
We assume the kernel function $K(\cdot)$ satisfies the following properties:
\begin{enumerate}
\item $K$ is supported on $[-1,1]$, and $\int_{-1}^{1}K(x)dx = 1$; 
\item $\int_{-1}^{1} K(x) x^k dx = 0$ for all $k=1,\ldots,\ell$;
\item $\int_{-1}^{1} K^2(x) dx =: \kappa < \infty$.
\end{enumerate}
We remark that the above assumptions on properties of $K(\cdot)$ are not restrictive,
as the choice of the kernel function $K(\cdot)$ belongs to the data analyzer and there are many simple functions
satisfying (A1) through (A3) for various values of $\ell$.
Examples include the box kernel $K(u)=\mathbb I[|u|\leq 1]$, the triangular kernel $K(u)=\max\{0,1-|u|\}$
and the Epanechnikov kernel $K(u)=\max\{0, \frac{3}{4}(1-u^2)\}$.

Given $0 < h < 1/2$ and a query point $x \in \left(h,1-h\right)$ we define for $j=1,\ldots, m$,
\begin{align}
K_j^h(x) = \int_{(j-1)/m}^{j/m}\frac{1}{h}K\left(\frac{x-u}{h}\right) du. \label{eqn:kernel_def}
\end{align}
The kernel smoothing step can then be understood as applying the $K_j^h(\cdot)$ operator over all observations or their local binning median surrogates and aggregating the results.

The next theorem states the main result for the combined two-step procedure, 
which shows that $\check f$ enjoys improved error convergence for highly smoother functions $f\in\Lambda(\beta,L)$ with $\beta>1$.
\begin{thm}\label{thm:median_kernel}
	Suppose $f\in\Lambda(\beta,L)$ for $\beta\in(1,\infty)$ and let $c \in (0,1/2)$ be a small constant.
If in Algorithm~\ref{algo:post_processing}, we choose $\mathcal{A}$ to be the kernel smoothing estimator defined in Equation~\eqref{eqn:kernel_def}, 	
$m\asymp \sqrt{n}/{\sqrt[4]{\log n}} $ and $h\asymp \left(nL^2\right)^{-\frac{1}{2\beta+1}}$,
then there exists an absolute constant $C>0$ and a constant $C_L>0$ depending on $L$ such that with probability at least 0.9, 
	\begin{align*}
		\int_{c}^{1-c}\abs{\check f(x)-f(x)}^2 dx \le C\left(C_Ln^{-\frac{2\beta}{2\beta+1}}+\epsilon^2\right).
	\end{align*} 
\end{thm}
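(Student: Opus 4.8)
The plan is to run the textbook bias--variance analysis of kernel smoothing, but on the binned medians $z_j$ rather than on raw observations, using Lemma~\ref{lem:meta_lemma_uniform_mult} to control how $z_j$ deviates from $f(j/m)$. Work in $d=1$, so $s=n/m$. Condition on the probability-$\ge 0.9$ event of Lemma~\ref{lem:meta_lemma_uniform_mult}; this event can be taken to be a product event over the pairwise disjoint bins, so conditionally we still have $z_j=f(j/m)+\triangle_j+\eta_j$ with $|\triangle_j|\le\max_{x\in\text{bin }j}|f(x)-f(j/m)|\le L/m$, $|\E\eta_j|\le C(\epsilon+\log m/s)$, $\variance(\eta_j)\le C/s$, and $\{\eta_j\}_j$ mutually independent. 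Writing $\check f(x)=\sum_{j=1}^m K_j^h(x)z_j$, decompose
\[
\check f(x)-f(x)=\underbrace{\Big(\sum_j K_j^h(x)f(\tfrac jm)-f(x)\Big)}_{B_1(x)}+\underbrace{\sum_j K_j^h(x)\triangle_j}_{B_2(x)}+\underbrace{\sum_j K_j^h(x)\E\eta_j}_{M(x)}+\underbrace{\sum_j K_j^h(x)(\eta_j-\E\eta_j)}_{V(x)},
\]
and bound $\int_c^{1-c}|\check f-f|^2\le 4\int_c^{1-c}\big(B_1^2+B_2^2+M^2+V^2\big)$; the terms $B_1,B_2,M$ are handled pointwise and $V$ in mean square.

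For $B_1$, peel off the Riemann-sum error $\sum_j\int_{(j-1)/m}^{j/m}\tfrac1h K(\tfrac{x-u}h)\big(f(\tfrac jm)-f(u)\big)\,du$, which is $O(L/m)$ since $f$ is $L$-Lipschitz ($\beta>1$ in Definition~\ref{defn:holder}) and $\sum_j|K_j^h(x)|\le\int_{-1}^1|K|=O(1)$; what remains, $\int_{-1}^1 K(v)f(x-hv)\,dv-f(x)$ (the change of variables is valid because $h<c$ and $x\in(c,1-c)$ keep $[x-h,x+h]\subseteq[0,1]$), is the classical kernel bias: Taylor-expanding $f$ to order $\ell=\lfloor\beta\rfloor$, the vanishing-moment conditions (A2) annihilate the polynomial part and the H\"older control on $f^{(\ell)}$ bounds the remainder by $O(Lh^\beta)$, so $|B_1(x)|=O(Lh^\beta+L/m)$. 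For $B_2$, $|B_2(x)|\le\tfrac Lm\sum_j|K_j^h(x)|=O(L/m)$; for $M$, $|M(x)|\le\max_j|\E\eta_j|\cdot\sum_j|K_j^h(x)|=O(\epsilon+\log m/s)$.

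For $V$, independence of the $\eta_j$ removes all cross terms, so $\E[V(x)^2]=\sum_j K_j^h(x)^2\,\variance(\eta_j)\le\frac Cs\sum_j K_j^h(x)^2$; since bin $j$ has length $1/m$, Cauchy--Schwarz gives $K_j^h(x)^2\le\frac1m\int_{(j-1)/m}^{j/m}\frac1{h^2}K(\tfrac{x-u}h)^2\,du$, whence $\sum_j K_j^h(x)^2\le\frac1{mh}\int_{-1}^1K^2=\frac\kappa{mh}$ and $\E\int_c^{1-c}V^2\le\frac{C\kappa}{smh}=\frac{C\kappa}{nh}$ using $sm=n$. Markov's inequality then makes $\int_c^{1-c}V^2=O(1/(nh))$ with probability $1-\delta$, and $\delta$ is fixed so that the overall failure probability stays below $0.1$. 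Collecting everything,
\[
\int_c^{1-c}|\check f-f|^2\ \lesssim\ L^2h^{2\beta}+\frac1{nh}+\epsilon^2+\frac{L^2}{m^2}+\frac{(\log m)^2}{s^2}.
\]
With $h\asymp(nL^2)^{-1/(2\beta+1)}$ the first two terms balance at $\asymp L^{2/(2\beta+1)}n^{-2\beta/(2\beta+1)}$, and with $m\asymp\sqrt n/\sqrt[4]{\log n}$ (so $s\asymp\sqrt n\,\sqrt[4]{\log n}$) the last two terms are $O(n^{-1}\mathrm{polylog}\,n)$, of strictly smaller order than $n^{-2\beta/(2\beta+1)}$ for large $n$; absorbing them and the Markov constant yields $\int_c^{1-c}|\check f-f|^2\le C(C_Ln^{-2\beta/(2\beta+1)}+\epsilon^2)$ with $C$ absolute and $C_L\asymp L^{2/(2\beta+1)}$.

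The two points that need care are (i) the kernel-bias step --- confirming that an order-$\ell$ kernel with $\ell\ge\lfloor\beta\rfloor$ annihilates the degree-$\ell$ Taylor polynomial of $f$ while the H\"older remainder contributes only $O(Lh^\beta)$ --- and (ii) checking that the event of Lemma~\ref{lem:meta_lemma_uniform_mult} really can be chosen as a product over bins, so that conditioning preserves both the independence and the moment bounds of the $\eta_j$; the remainder is routine bookkeeping. Conceptually, this is the decoupled analysis promised in the abstract: the adversary surfaces only through the deterministic-looking bias $M$, and never inflates the variance term $V$.
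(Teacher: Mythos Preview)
Your proposal is correct and follows essentially the same route as the paper: both arguments plug the decomposition $z_j=f(j/m)+\triangle_j+\eta_j$ from Lemma~\ref{lem:meta_lemma_uniform_mult} into the kernel estimator, separate out the classical kernel-smoothing bias $O(Lh^\beta+L/m)$, the contamination bias $O(\epsilon+\log m/s)$ via the $\ell_1$--$\ell_\infty$ bound on $\sum_j K_j^h(x)$, and the variance $O(1/(nh))$ via $\sum_j K_j^h(x)^2\le\kappa/(mh)$, then balance with the stated choices of $m$ and $h$. Your four-term additive split is slightly more explicit than the paper's bias--variance wording, and you are a bit more careful about signed kernels (bounding $\sum_j|K_j^h|$ rather than $\sum_j K_j^h$), but the argument is the same.
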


\begin{rem}
Theorem \ref{thm:median_kernel} nicely complements the results in Theorem \ref{thm:l2_upper_bound}
by applying to ``smoother'' functions $f\in\Lambda(\beta,M)$ with $\beta>1$.
The error bound is very similar to the one in Theorem \ref{thm:l2_upper_bound},
except that the constant in front of the statistical rate term $n^{-2\beta/(2\beta+1)}$ is more involved and we decided to suppress its exact dependency on $L$.
The contamination dependency term $\epsilon^2$ is again \emph{independent} of smoothness constant $L$.
\end{rem}

\begin{rem}
We consider the integrated loss over the interval $[c,1-c]$ to
constrains the evaluation of the mean-square error to a strict interior of $(0,1)$ which avoid “boundary effects” of the kernel smoothing estimator.
\end{rem}

\begin{rem}
It possible to extend kernel smoothing estimator to $d > 1$.
However, it requires more complicated kernel construction, especially for the fixed design.
Therefore, for $d>1$ we consider another estimator, local polynomial regression for estimation.
\end{rem}

\subsection{Post-processing by Local Polynomial Regression}
\label{sec:mult_dim}

In this section we choose $\mathcal{A}$ to be a local polynomial regression estimator for multivariate robust nonparametric regression.
We let $h$ be a fixed bandwidth parameter.
Given a query point $x \in \left[0,1\right]^d$, local polynomial regression uses the following least square solution as an estimate of $f$ around $x$\begin{align}
\hat{f}_h =\arg\min_{g \in \mathcal{P}_\ell}\sum_{\vect{j} \in [m]^d} \mathbb{I}\left[\frac{\vect{j}}{m} \in B_h^\infty\left(x\right)\right]\left(z_{\vect{j}}-g(\frac{\vect{j}}{m})\right)^2 \label{eqn:lpr}
\end{align}
where $\mathcal{P}_\ell$ denotes the set of polynomials with degree $d$, $B_h^{\infty}\left(x\right)$ is the a ball with radius $h$ in infinity norn and $\mathbb{I}\left[\cdot\right]$ is the indicator function.

The following theorems show the performance for \holder and Sobolev class.
\begin{thm}\label{thm:multivariate_lpr}
Suppose $f\in\Lambda(\beta,L)$ for $\beta\in(1,\infty)$ or $f\in\Sigma(\beta,p,L)$ for some positive integer $\beta,p$ that satisfy $\frac{\beta-1}{d} \ge \frac{1}{p}$.
If in Algorithm~\ref{algo:post_processing}, we choose $\mathcal{A}$ to be the local polynomial regression estimator defined in Equation~\eqref{eqn:lpr}, $m\asymp \sqrt{n}/{\sqrt[4]{\log n}} $ and $h\asymp \left(nL^2\right)^{-\frac{1}{2\beta+d}}$
then there exists an absolute constant $C>0$ and a constant $C_L>0$ depending on $L$ such that with probability at least 0.9, 
\begin{align*}
	\norm{\hat{f}-f}_2^2 \le C\left(C_Ln^{-\frac{2\beta}{2\beta+d}}+\epsilon^2\right).
\end{align*} 
\end{thm}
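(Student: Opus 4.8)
The plan is to treat the binned medians $\{z_{\vect{j}}\}$ produced in the first step of Algorithm~\ref{algo:post_processing} as noisy observations of $f$ on the regular grid $\{\vect{j}/m\}_{\vect{j}\in[m]^d}$ and then apply the standard fixed-design analysis of local polynomial regression on top of them. First I would condition on the probability-$0.9$ event of Lemma~\ref{lem:meta_lemma_uniform_mult}, on which $z_{\vect{j}} = f(\vect{j}/m) + \triangle_{\vect{j}} + \eta_{\vect{j}}$ with $|\triangle_{\vect{j}}| \le \max_{x\in\text{bin }\vect{j}}|f(x)-f(\vect{j}/m)|$, $|\E[\eta_{\vect{j}}]|\le C(\epsilon + \tfrac{\log m}{s})$, and $\variance(\eta_{\vect{j}})\le C/s$. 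Since each bin has $\ell_\infty$-diameter $1/m$ and $f\in\Lambda(\beta,L)$ with $\beta>1$ is in particular Lipschitz with gradient of order $L$ (and, in the Sobolev case, the hypothesis $\tfrac{\beta-1}{d}\ge\tfrac1p$ is exactly the embedding condition that keeps the relevant lower-order derivatives of $f$ bounded in terms of $L$), this gives the uniform discretization bound $|\triangle_{\vect{j}}| = O(L/m)$.

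Next, fix a query point $x$ and write the estimator~\eqref{eqn:lpr} in linear-smoother form $\check f(x) = \sum_{\vect{j}} w_{\vect{j}}(x)\, z_{\vect{j}}$, with weights supported on $\{\vect{j}: \vect{j}/m\in B_h^\infty(x)\}$. The structural facts I would establish about these weights are: (i) they reproduce polynomials of degree $\le\ell$, in particular $\sum_{\vect{j}} w_{\vect{j}}(x) = 1$ and $\sum_{\vect{j}}|w_{\vect{j}}(x)| = O(1)$; and (ii) $\sum_{\vect{j}} w_{\vect{j}}(x)^2 = O\big(1/(mh)^d\big)$. Both follow because the grid points inside an $\ell_\infty$-ball of radius $h$ form a near-uniform design with $\Theta((mh)^d)$ points and a well-conditioned empirical moment matrix, which holds since $mh\to\infty$ under the stated scalings — and this remains true at the boundary of $[0,1]^d$, where the clipped ball still contains $\Omega((mh)^d)$ grid points, so no interior restriction is needed. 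I would then split $\check f(x) - f(x)$ into (a) the polynomial-approximation bias $\sum_{\vect{j}} w_{\vect{j}}(x) f(\vect{j}/m) - f(x)$; (b) $\sum_{\vect{j}} w_{\vect{j}}(x)\triangle_{\vect{j}}$; (c) $\sum_{\vect{j}} w_{\vect{j}}(x)\E[\eta_{\vect{j}}]$; and (d) $\sum_{\vect{j}} w_{\vect{j}}(x)(\eta_{\vect{j}}-\E[\eta_{\vect{j}}])$. Term (a) is $O(Lh^\beta)$ pointwise in the Hölder case and $O(Lh^\beta)$ in the $L^2(dx)$-averaged sense in the Sobolev case (Taylor expansion with integral remainder, then Hölder's inequality using $\|D^\beta f\|_p\le L$ and the embedding hypothesis). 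Terms (b) and (c) are $O(L/m)$ and $O(\epsilon + \tfrac{\log m}{s})$ respectively, by (i). The variance of (d) is $\variance(\eta_{\vect{j}})\sum_{\vect{j}} w_{\vect{j}}(x)^2 = O\big(\tfrac{1}{s(mh)^d}\big) = O\big(\tfrac{1}{nh^d}\big)$, using (ii) and $s m^d = n$.

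It then remains to square, integrate over $x\in[0,1]^d$, substitute $m\asymp\sqrt{n}/\sqrt[4]{\log n}$ and $h\asymp(nL^2)^{-1/(2\beta+d)}$, and pass to a high-probability statement. Terms (b) and (c) contribute $O(L^2/m^2) + O(\epsilon^2 + (\log m/s)^2)$, where $L^2/m^2\asymp L^2\sqrt{\log n}/n$ and $(\log m/s)^2$ are both of strictly smaller order than $n^{-2\beta/(2\beta+d)}$ under the chosen $m$, leaving the contamination term $O(\epsilon^2)$; term (a) contributes $O(L^2h^{2\beta})$ and the integrated second moment of (d) is $O(1/(nh^d))$, and the choice of $h$ equalizes these two at $C_L\, n^{-2\beta/(2\beta+d)}$ with $C_L = \Theta(L^{2d/(2\beta+d)})$. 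Since (a)--(c) are controlled deterministically on the Lemma's event, I would finish by applying Markov's inequality to the integrated second moment of term (d) and intersecting with that event, adjusting the absolute constants so that the combined probability is at least $0.9$.

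The step I expect to be the main obstacle is controlling term (a) in the Sobolev case: unlike the Hölder case, where a pointwise Taylor bound suffices, here one only has an $L^p$ bound on the order-$\beta$ derivatives, so the local polynomial approximation error must be handled in an integrated sense — expanding along segments from $x$ to nearby grid points, exchanging the order of integration via Fubini, and invoking Hölder's inequality — and this is precisely where the condition $\tfrac{\beta-1}{d}\ge\tfrac1p$ is needed to make both the approximation-bias estimate and the discretization bound $|\triangle_{\vect{j}}| = O(L/m)$ go through. A secondary technical nuisance is verifying the design-regularity claims (i)--(ii) uniformly over all query points, including near the boundary of $[0,1]^d$, for the fixed grid intersected with $\ell_\infty$-balls.
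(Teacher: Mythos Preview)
Your proposal is correct and follows essentially the same route as the paper: both condition on the event of Lemma~\ref{lem:meta_lemma_uniform_mult}, split off the bounded-bias contributions $\triangle_{\vect{j}}+\E[\eta_{\vect{j}}]$ using the $\ell_1$-boundedness of the LPR weights (your claim (i), the paper's Lemma~\ref{lem:key_lemma_lpr}), invoke Sobolev embedding under $\tfrac{\beta-1}{d}\ge\tfrac{1}{p}$ to control $|\triangle_{\vect{j}}|$, and treat the remaining $f(\vect{j}/m)+(\eta_{\vect{j}}-\E[\eta_{\vect{j}}])$ as standard fixed-design LPR with centered noise. The only difference is granularity: the paper dispatches your terms (a) and (d) in one line by citing \cite{nemirovski2000topics}, whereas you unpack that analysis explicitly via polynomial reproduction and the $\ell_2$ weight bound (ii).
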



Again this matches the minimax lower bound.
The proof is similar to that of Theorem~\ref{thm:median_kernel} where we just need to combine Lemma~\ref{lem:meta_lemma_uniform_mult} and standard analysis of local polynomial regression.

\section{Minimax lower bounds}
\label{sec:lower_bound}
We complement our positive results
with the following negative result,
which establishes minimax lower bounds for nonparametric estimation over the \holder and Sobolev function class
under the Huber's $\epsilon$-contamination model.

\begin{thm} 
	\label{thm:lower_bound_for_holder}
	For any $\beta,L>0$ and $\epsilon\in(0,1)$, there exists an absolute constant $C>0$ such that
	\begin{align*}
	\inf_{\hat{f}}\sup_{f \in \Lambda\left(\beta,L\right)}\Pr_{f,\epsilon,Q}\left[\|\hat{f}-f\|_2^2\ \geq C\left(\wp(L) \cdot n^{-\frac{2\beta}{2\beta+d}}+\epsilon^2\right)\right] \geq 1/4,
	\end{align*} 
	where $\wp(L)$ is a non-zero polynomial function of $L$.
\end{thm}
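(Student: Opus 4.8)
The plan is to prove the two terms of the lower bound separately and then combine them, since a lower bound on a sum follows (up to constants) from a lower bound on each summand.

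\textbf{The statistical rate $\wp(L)\cdot n^{-2\beta/(2\beta+d)}$.} This is the classical minimax lower bound for nonparametric regression over a H\"older ball, and can be established even without contamination (take $Q = P$, i.e. $\epsilon$ effectively unused, or simply note that the uncontaminated problem is a special case where the adversary is harmless). I would use the standard Fano / Assouad approach: construct a packing of $\Lambda(\beta,L)$ by perturbing a base function with small localized bumps. Concretely, take $\varphi$ a fixed smooth bump supported on $[0,1]^d$, tile $[0,1]^d$ into $M^d$ subcubes of side $1/M$, and for each sign vector $\omega\in\{0,1\}^{M^d}$ set $f_\omega(x) = \sum_{\vect{k}} \omega_{\vect{k}}\, h^{\beta}\,\varphi(M(x - x_{\vect{k}}))$ where $h = 1/M$; the amplitude $h^\beta$ (times an appropriate constant involving $L$) is chosen exactly so $f_\omega\in\Lambda(\beta,L)$, which forces the $L$-dependence $\wp(L) = L^{2d/(2\beta+d)}$ into the final rate. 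The pairwise $L_2$-distances are $\asymp h^{2\beta}\cdot (\text{Hamming distance})\cdot h^d$, and the KL divergence between the induced Gaussian observation laws is $\asymp n h^{2\beta}\cdot(\text{Hamming distance})/M^d$ per coordinate; Assouad's lemma (or Varshamov--Gilbert plus Fano) then yields the rate after optimizing $M\asymp (nL^2)^{1/(2\beta+d)}$. This is entirely standard (see Tsybakov) and I would cite it, spelling out only the bookkeeping of the $L$-dependence.

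\textbf{The contamination term $\epsilon^2$.} Here the adversary must be used. The key point is that with probability mass $\epsilon$ the adversary can make observations at design points in, say, a small region look as though they were generated by a \emph{different} H\"older function. I would reduce to a two-point (or Le Cam) argument: pick two functions $f_0, f_1\in\Lambda(\beta,L)$ with $\|f_0 - f_1\|_2 \asymp \epsilon$ that differ only on a region $R$ of Lebesgue measure $\asymp 1$ but whose \emph{sup-norm} difference on $R$ is $\asymp \epsilon$ (a single shallow bump of height $\asymp\epsilon$ and width $\asymp 1$, which sits in $\Lambda(\beta,L)$ provided $\epsilon \lesssim L$; the regime $\epsilon \gtrsim L$ or $\epsilon$ near $1$ is handled separately and trivially since then even the constant function witnesses error $\gtrsim\epsilon^2$ is impossible — rather one uses that $\|\hat f - f\|_2^2$ cannot be driven below a constant). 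The adversary's strategy: under $f_0$, on the $\epsilon$-fraction of contaminated points falling in $R$, emit samples from $N(f_1(x_{\vect i}),1)$; under $f_1$, do nothing (or symmetrically). Then the mixture distributions $(1-\epsilon)P_{f_0} + \epsilon Q_0$ and $(1-\epsilon)P_{f_1} + \epsilon Q_1$ can be made to have total variation distance bounded away from $1$ — in fact one arranges $Q_0$ so that the contaminated-$f_0$ law on $R$ equals $P_{f_1}$ restricted appropriately, making the two observation distributions genuinely indistinguishable up to a constant. Le Cam's two-point method then gives that no estimator can have squared error below $c\,\|f_0-f_1\|_2^2 \asymp \epsilon^2$ with probability exceeding $3/4$ uniformly over the class.

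\textbf{Combining and the main obstacle.} To merge the two bounds, I would run a single argument over the union of the two hypothesis families, or more simply invoke that if $\inf_{\hat f}\sup_f \Pr[\text{err}^2 \ge c_1 A] \ge 1/4$ and $\inf_{\hat f}\sup_f \Pr[\text{err}^2 \ge c_2 B] \ge 1/4$ over the same class, then $\inf_{\hat f}\sup_f \Pr[\text{err}^2 \ge c(A+B)] \ge 1/4$ with $c = \min(c_1,c_2)/2$, since $A + B \le 2\max(A,B)$. The main obstacle is the contamination lower bound: one must construct the adversarial distributions $Q_0, Q_1$ carefully so that the two contaminated mixtures coincide (or nearly so) on the relevant design points while $f_0, f_1$ remain a genuine $\epsilon$-separated pair inside $\Lambda(\beta,L)$ — in particular verifying the H\"older constraint for the $\epsilon$-height bump and correctly handling the boundary case $\epsilon \gtrsim L$ (where the polynomial $\wp(L)$ and the fact that the bound is a \emph{sum} rather than a max matter). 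I expect this verification, together with making the TV / indistinguishability bound quantitative, to be the bulk of the work; the statistical-rate half is routine.
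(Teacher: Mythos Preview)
Your plan is correct and follows essentially the same two-part strategy as the paper: cite the standard Fano/Assouad construction for the $n^{-2\beta/(2\beta+d)}$ statistical rate, and run a two-point Le~Cam argument for the $\epsilon^2$ contamination term, then combine via $A+B\le 2\max(A,B)$.

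The one genuine difference is in the contamination construction, and here the paper's choice is strictly simpler than yours. The paper takes $f_0$ and $f_1$ to be \emph{constant} functions, $f_0\equiv 0$ and $f_1\equiv c$ with $c$ chosen so that the per-observation total variation $\TV(N(0,1),N(c,1))=\epsilon/(1-\epsilon)$, hence $c\asymp\epsilon$ and $\|f_0-f_1\|_2^2=c^2\asymp\epsilon^2$. It then invokes Theorem~5.1 of \citet{chen2015robust} as a black box for the indistinguishability step (your explicit adversary construction is the content of that theorem). Because constants lie in $\Lambda(\beta,L)$ for every $\beta>0$ and every $L>0$, this choice completely eliminates what you identify as your main obstacle: there is no H\"older-constraint check on a bump of height $\epsilon$, and no boundary case $\epsilon\gtrsim L$ to handle separately. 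Your shallow-bump construction also works, but replacing it with constants removes the bulk of the work you anticipate.
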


\begin{thm} 
	\label{thm:lower_bound_for_sobolev}
	Let $\beta,p$ be positive integers and satisfy $\frac{\beta-1}{d} \ge \frac{1}{p}$.
	For any  $\epsilon\in(0,1)$, there exists an absolute constant $C>0$ such that
	\begin{align*}
	\inf_{\hat{f}}\sup_{f \in \Sigma\left(\beta,p,L\right)}\Pr_{f,\epsilon,Q}\left[\|\hat{f}-f\|_2^2\ \geq C\left(\wp(L) \cdot n^{-\frac{2\beta}{2\beta+d}}+\epsilon^2\right)\right] \geq 1/4,
	\end{align*} 
	where $\wp(L)$ is a non-zero polynomial function of $L$.
\end{thm}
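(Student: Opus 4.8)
The plan is to establish the two terms in the lower bound separately and combine them using $\max\{a,b\}\ge\tfrac12(a+b)$. Concretely, it suffices to show, for two possibly different hardest instances, that (i) $\inf_{\hat f}\sup_{f,Q}\Pr_{f,\epsilon,Q}\!\left[\|\hat f-f\|_2^2\ge c_1\,\wp(L)\,n^{-\frac{2\beta}{2\beta+d}}\right]\ge 1/4$ and (ii) $\inf_{\hat f}\sup_{f,Q}\Pr_{f,\epsilon,Q}\!\left[\|\hat f-f\|_2^2\ge c_2\,\epsilon^2\right]\ge 1/4$. Then, for any estimator $\hat f$ and with $C=\tfrac12\min\{c_1,c_2\}$, whichever of the two scales $\wp(L)n^{-2\beta/(2\beta+d)}$ and $\epsilon^2$ is the larger already exceeds $C(\wp(L)n^{-2\beta/(2\beta+d)}+\epsilon^2)$ after multiplying by the appropriate $c_i$, so the corresponding worst-case instance witnesses the combined bound at probability at least $1/4$. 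For (i), taking the adversary to be $Q(x_\vect i)=N(f(x_\vect i),1)$ reduces the model to ordinary fixed-design Gaussian regression, so (i) is precisely the classical $L_2$ minimax lower bound over the Sobolev ball $\Sigma(\beta,p,L)$.

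For (i), I would run Assouad's lemma (equivalently Fano's method) over a smooth bump hypercube. Fix a compactly supported $C^\infty$ profile $\psi$ on $[0,1]^d$, tile $[0,1]^d$ by $M\asymp h^{-d}$ rescaled translates $\psi_k(x)=\psi((x-x_k)/h)$ with disjoint supports, and set $\phi_\nu=\sum_k \nu_k\, L h^{\beta}\,\psi_k$ for $\nu\in\{0,1\}^M$. Because the supports are disjoint, $\|D^\beta\phi_\nu\|_p^p=\sum_k\nu_k\|D^\beta(Lh^\beta\psi_k)\|_p^p\le M\,(Lh^\beta\cdot h^{-\beta})^p\,h^d\,\|D^\beta\psi\|_p^p\asymp L^p$, so every $\phi_\nu\in\Sigma(\beta,p,L)$; the hypothesis $\tfrac{\beta-1}{d}\ge\tfrac1p$, i.e.\ $\beta-d/p\ge 1$ (the Sobolev embedding into $C^1$), is exactly what guarantees that this ``dense'' construction yields the sharp $n^{-2\beta/(2\beta+d)}$ rate and that the $L_\infty$, $L_2$ and $L_p$ norms of the bumps scale as in the \holder case. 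Standard computations give $\|\phi_\nu-\phi_{\nu'}\|_2^2\asymp L^2h^{2\beta}h^d\,\rho(\nu,\nu')$ (with $\rho$ the Hamming distance) and $\kl$ between the $n$-fold observation models $\asymp nL^2h^{2\beta+d}\,\rho(\nu,\nu')$; choosing $h\asymp(nL^2)^{-1/(2\beta+d)}$ (valid once $h\gtrsim 1/p$, i.e.\ $n\gtrsim L^{d/\beta}$, the small-$n$ regime being absorbed into the constant) makes the per-coordinate $\kl$ bounded, and the testing form of Assouad's lemma gives $\inf_{\hat f}\sup_\nu\Pr_\nu[\|\hat f-\phi_\nu\|_2^2\gtrsim M L^2 h^{2\beta}h^d]\ge 1/4$, where $M L^2 h^{2\beta}h^d\asymp L^2h^{2\beta}\asymp L^{2d/(2\beta+d)}\,n^{-2\beta/(2\beta+d)}$, so $\wp(L)=L^{2d/(2\beta+d)}$ is a non-zero polynomial in $L$.

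For (ii) the decisive observation is that, unlike the \holder class, $\Sigma(\beta,p,L)$ contains every constant function (all order-$\beta$ derivatives vanish), so a bare two-point argument suffices. Take $f_0\equiv 0$ and $f_1\equiv t$ with $t\asymp\epsilon$ when $\epsilon\le 1/2$ and $t$ a fixed constant (say $t=2$) when $\epsilon>1/2$; both lie in $\Sigma(\beta,p,L)$. At each design point, $\TV(N(0,1),N(t,1))\le t/2\le \epsilon/(1-\epsilon)$, so by the standard contamination-matching lemma there exist adversarial measures $Q_0(x_\vect i),Q_1(x_\vect i)$ with $(1-\epsilon)N(0,1)+\epsilon Q_0(x_\vect i)=(1-\epsilon)N(t,1)+\epsilon Q_1(x_\vect i)$ for every $\vect i$. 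Hence the full data distributions under $(f_0,Q_0)$ and $(f_1,Q_1)$ are identical, and Le Cam's two-point bound gives $\inf_{\hat f}\max_{b\in\{0,1\}}\Pr_{f_b,\epsilon,Q_b}[\|\hat f-f_b\|_2^2\ge\tfrac14\|f_0-f_1\|_2^2]\ge\tfrac12>\tfrac14$, with $\|f_0-f_1\|_2^2=t^2\asymp\epsilon^2$ since the domain has unit volume. Combining (i) and (ii) as in the first paragraph yields the theorem.

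The main obstacle I anticipate lies entirely in part (i): carefully verifying Sobolev membership of the hypercube with the correct powers of $L$ and $h$, invoking $\tfrac{\beta-1}{d}\ge\tfrac1p$ exactly where it is needed so that the ``dense'' minimax rate $n^{-2\beta/(2\beta+d)}$ (rather than a slower wavelet-type rate) is the relevant one, and the bookkeeping that passes from the expectation form of Assouad's lemma to a probability-at-least-$1/4$ statement while absorbing the small-sample regime into the constant. Part (ii) and the final combination step are routine.
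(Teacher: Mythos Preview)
Your proposal is correct and follows the same route as the paper: split the bound into the classical statistical rate (which the paper simply defers to \citet{tsybakov2009introduction} while you spell out the Assouad bump-hypercube construction) and the $\epsilon^2$ contamination term, handled via a two-point argument on two constant functions using the contamination-matching principle (the paper invokes Theorem~5.1 of \citet{chen2015robust}, which is precisely your ``contamination-matching lemma''). One minor quibble: $L^{2d/(2\beta+d)}$ is not literally a polynomial in $L$, but that looseness is inherited from the theorem statement itself.
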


\begin{rem}
The lower bound can be again divided into two components, with the ``statistical rate'' component $\wp(L) n^{-2\beta/(2\beta+d)}$
going to zero as $n$ increases to infinity,
and the ``contamination dependency'' term $\epsilon^2$ that remains constant and is \emph{independent} of the smoothness constant $L$.
\end{rem}

\begin{rem}
The minimax lower bound established in Theorem \ref{thm:lower_bound_for_holder} and~\ref{thm:lower_bound_for_sobolev}
matches the upper bounds in Theorems \ref{thm:l2_upper_bound}, \ref{thm:median_kernel} and~\ref{thm:multivariate_lpr}
for both the statistical rate component and the contamination dependency component, up to absolute constants and polynomial functions of $L$.
In particular, the contamination dependency is $\epsilon^2$ in all theorems without any dependency on the smoothness constant $L$,
which demonstrates an interesting ``de-coupling'' effect between benign and adversarial noise variables.
\end{rem}

\section{Numerical results}
\label{sec:exp}
\begin{figure*}[t!]
	\centering
	\begin{subfigure}[t]{0.29\textwidth}
		\includegraphics[width=\textwidth]{./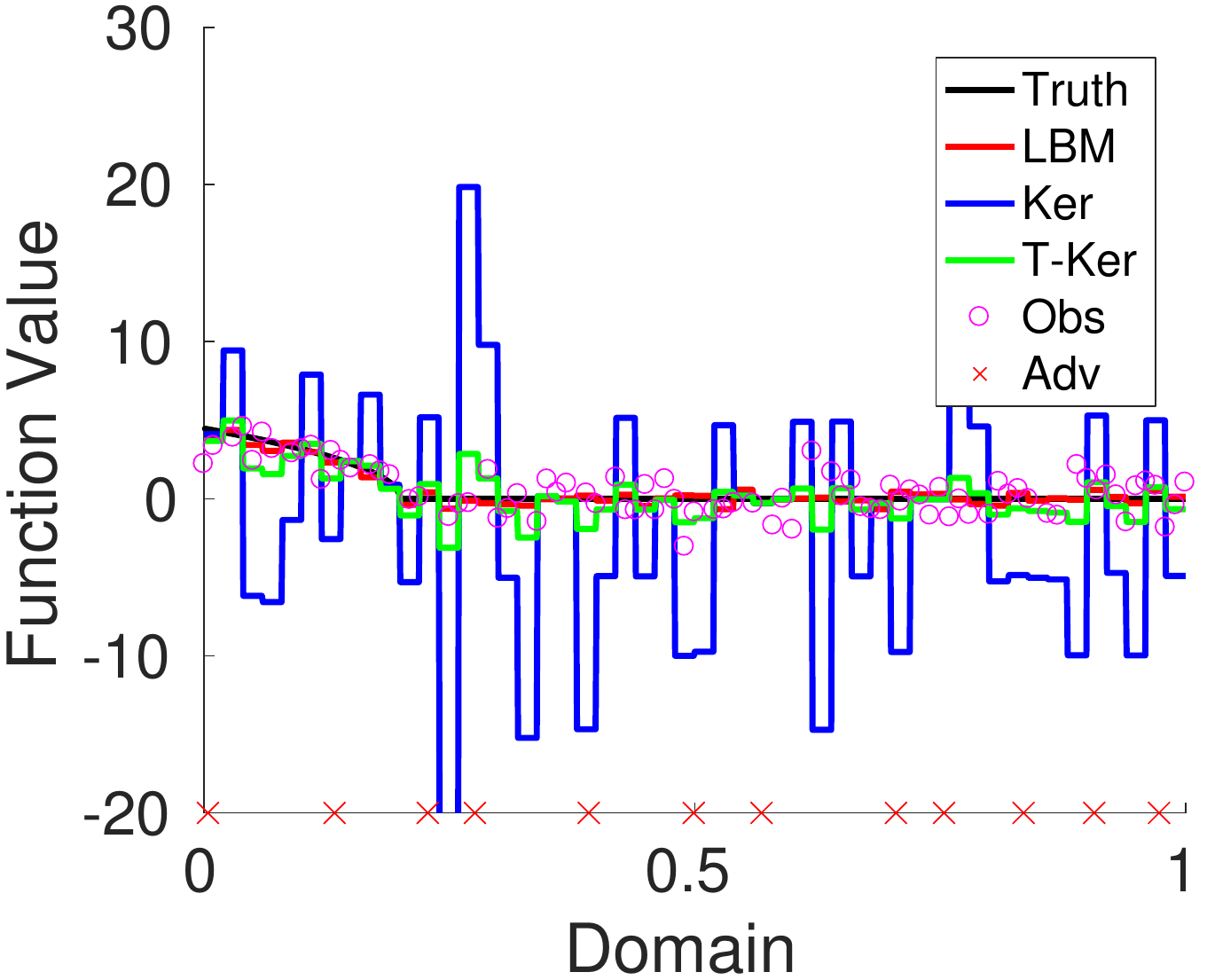}
		\caption{$L=10$}
	\end{subfigure}	
	\quad
	\begin{subfigure}[t]{0.29\textwidth}
		\includegraphics[width=\textwidth]{./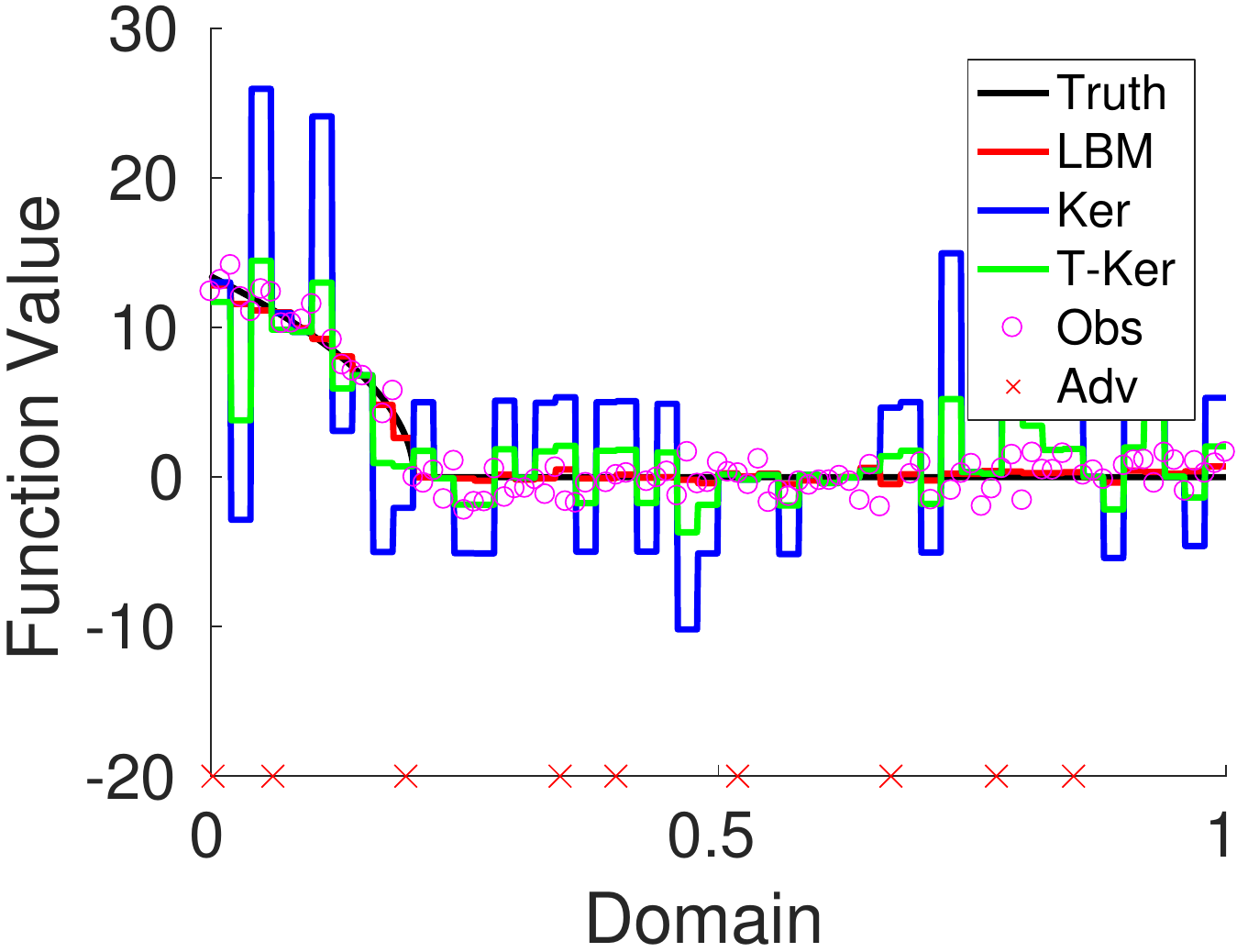}
		\caption{$L=30$}
	\end{subfigure}
	\quad
	\begin{subfigure}[t]{0.29\textwidth}
		\includegraphics[width=\textwidth]{./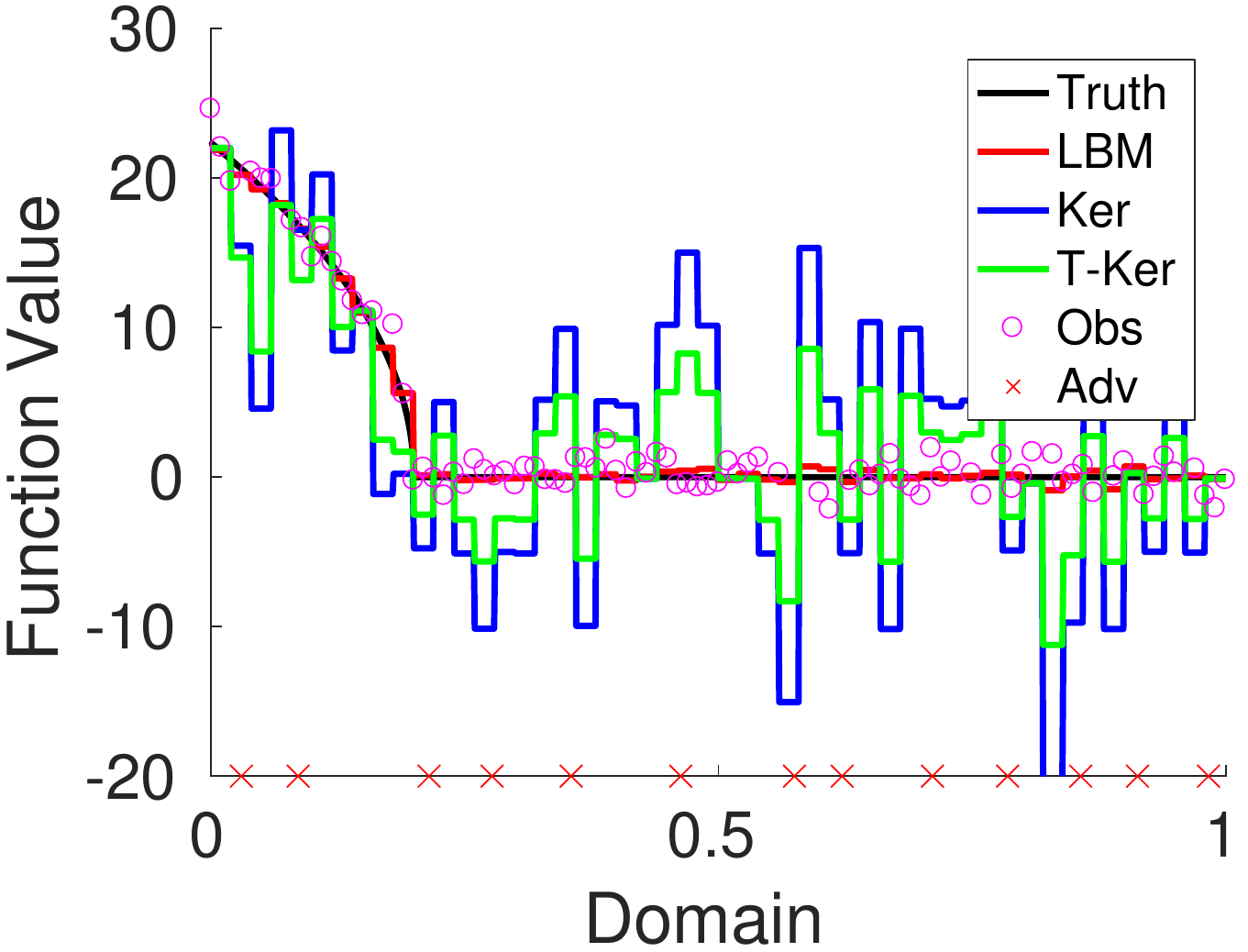}
		\caption{$L=50$}
	\end{subfigure}
	\caption{Experiments on estimating a function with low smoothness We choose $\beta = 0.5$, $\epsilon=0.1$, $\rho = 0.2$ and $f(x) = L(x-\rho)^\beta$  for $x \le \rho$ and $f(x)=0$ otherwise. Red 'X's represent the positions of adversarial points.
	}
	\label{fig:d1beta05}
\end{figure*}

\begin{figure*}[t!]
	\centering
	\begin{subfigure}[t]{0.29\textwidth}
		\includegraphics[width=\textwidth]{./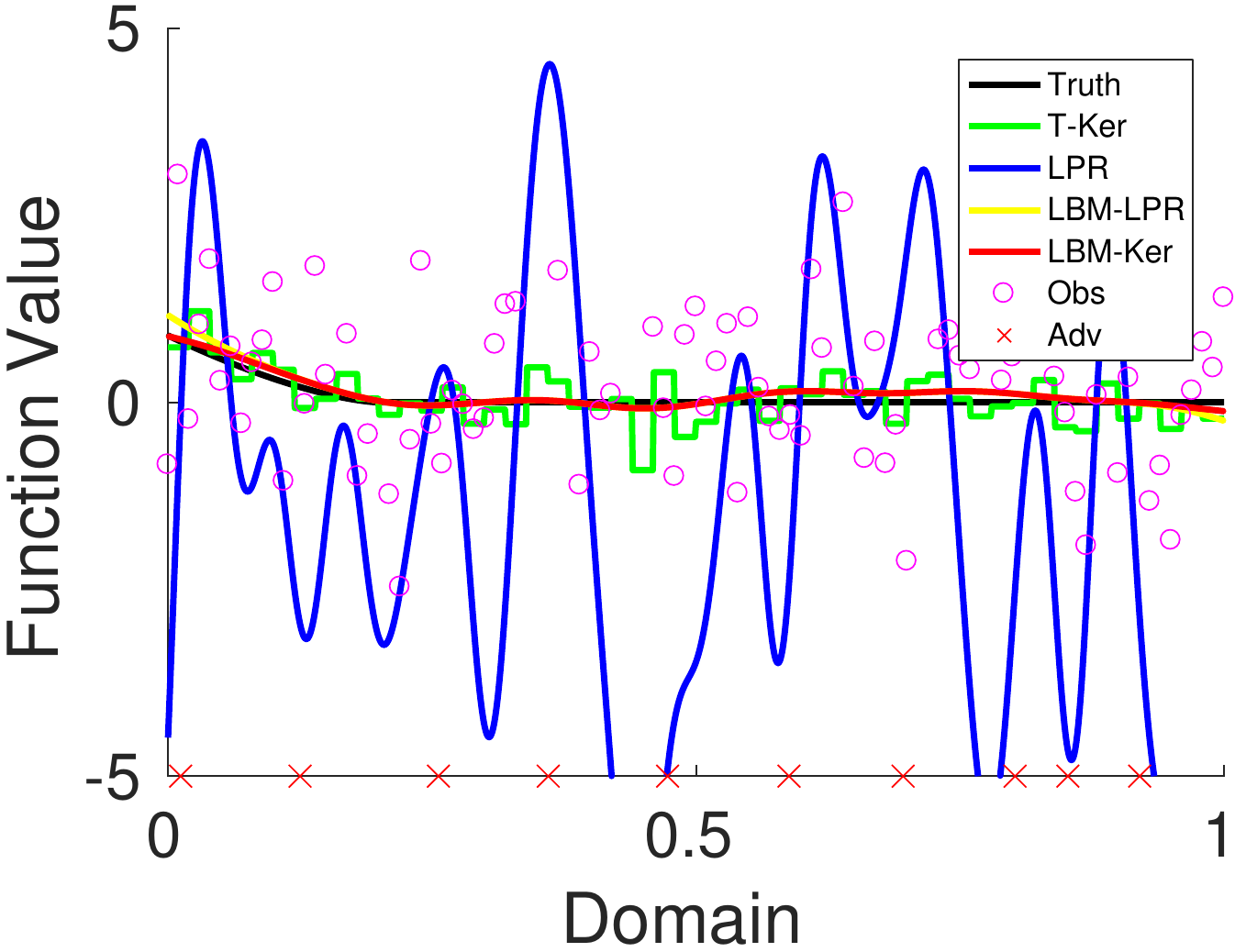}
		\caption{$L=10$}
	\end{subfigure}	
	\quad
	\begin{subfigure}[t]{0.29\textwidth}
		\includegraphics[width=\textwidth]{./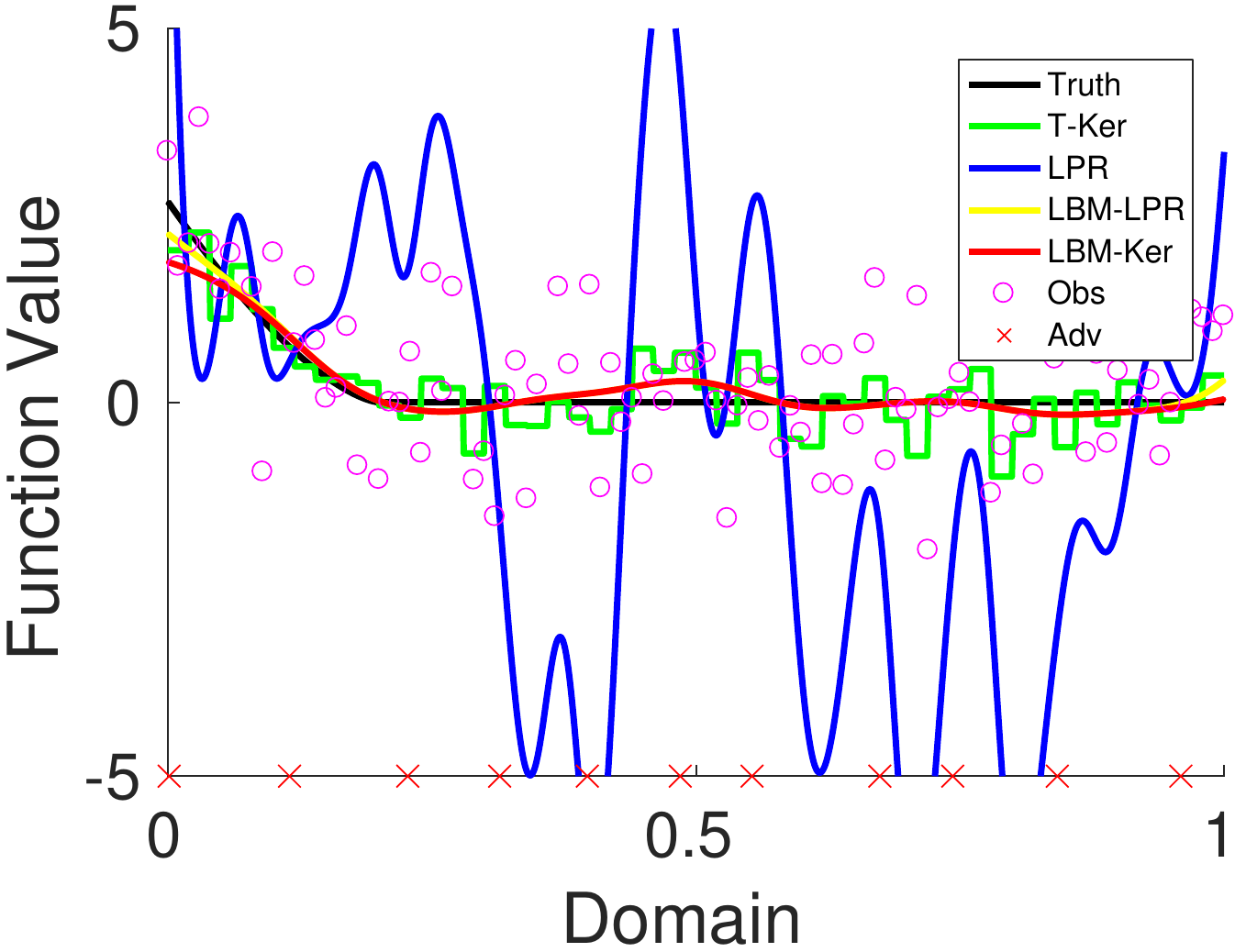}
		\caption{$L=30$}
	\end{subfigure}
	\quad
	\begin{subfigure}[t]{0.29\textwidth}
		\includegraphics[width=\textwidth]{./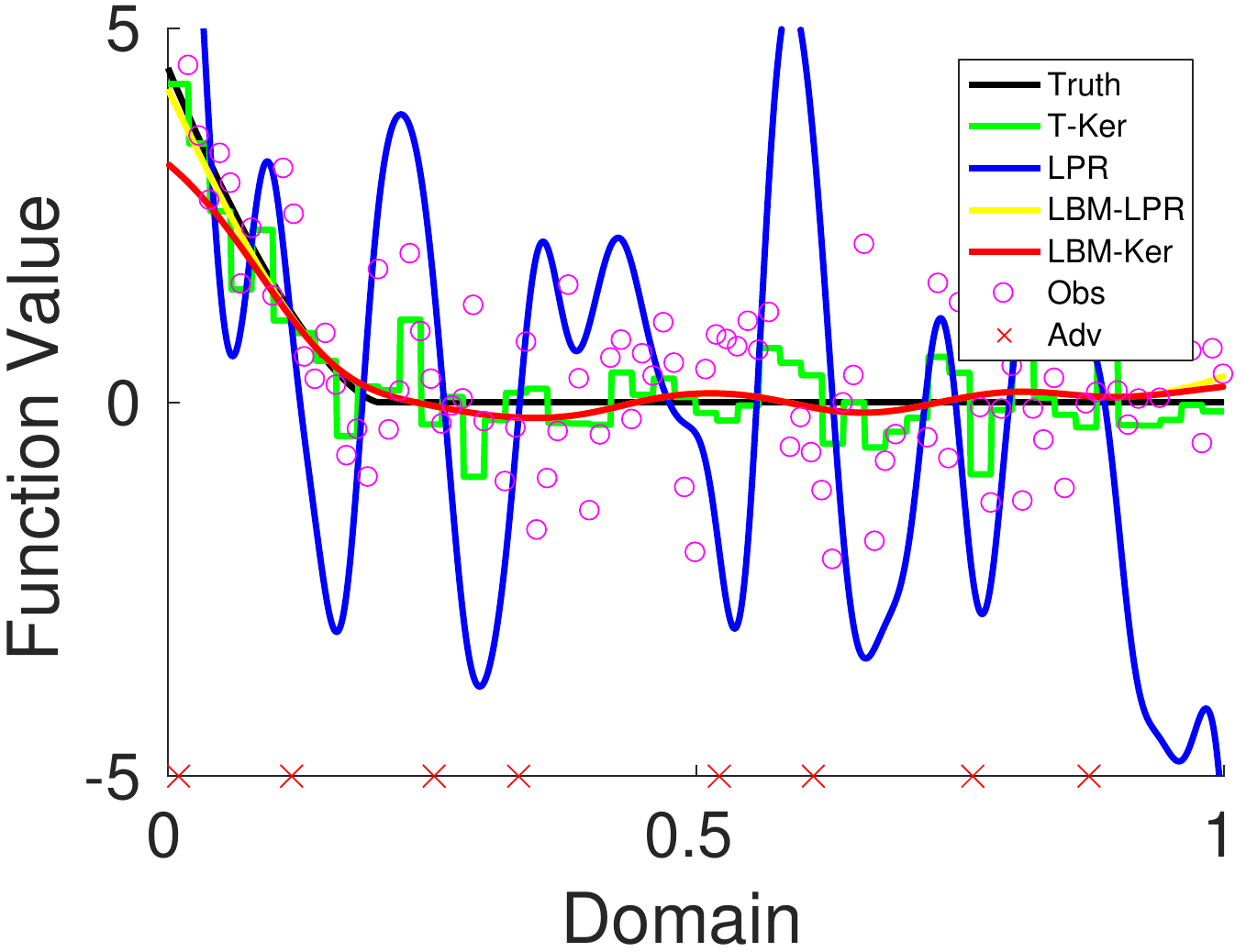}
		\caption{$L=50$}
	\end{subfigure}
	\caption{Experiments on estimating a function with high smoothness. We choose $\beta = 1.5$, $\epsilon=0.1$, $\rho = 0.2$ and $f(x) = L(x-\rho)^\beta$  for $x \le \rho$ and $f(x)=0$ otherwise. Red 'X's represent the positions of adversarial points.
	}
	\label{fig:d1beta15}

\end{figure*}

\begin{figure*}[t!]
	\centering
	\begin{subfigure}[t]{0.29\textwidth}
		\includegraphics[width=\textwidth]{./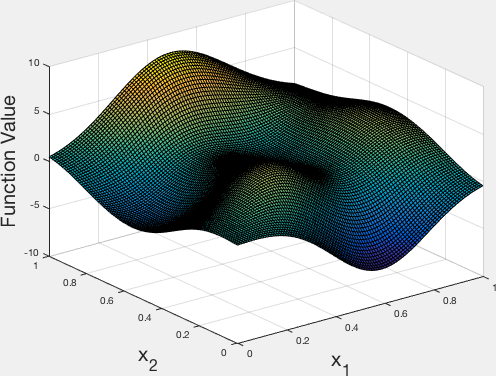}
		\caption{Original Function.}
	\end{subfigure}	
	\quad
	\begin{subfigure}[t]{0.29\textwidth}
		\includegraphics[width=\textwidth]{./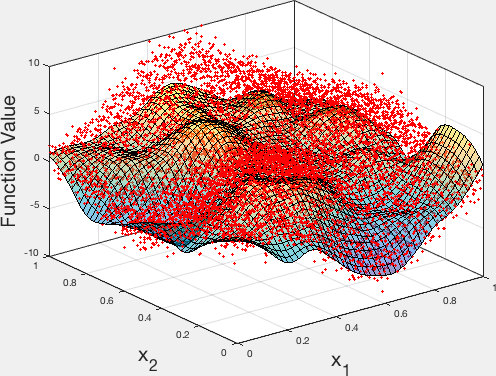}
		\caption{LPR estimator.}
	\end{subfigure}
	\quad
	\begin{subfigure}[t]{0.29\textwidth}
		\includegraphics[width=\textwidth]{./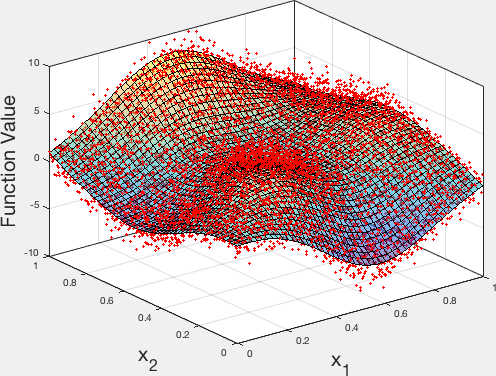}
		\caption{LBM+LPR estimator.}
	\end{subfigure}
	\caption{$2$-dimensional nonparametric estimation for peak function.
	}
	\label{fig:d2}
\end{figure*}

We first use simulations to verify our theoretical results.
In Figure~\ref{fig:d1beta05}-Figure~\ref{fig:d2}, we consider the following estimators:
(1) \textbf{Kernel}: the classical kernel smoothing estimator;
(2) \textbf{T-kernel}: truncated kernel smoothing estimator described in Section~\ref{sec:naive}.
	We use an additional hyperparameter $T$ to control the truncation level;
	(3) \textbf{LBM}: local binning median estimator described in Section~\ref{sec:lbm};
	(4) \textbf{LBM+Ker}: local binning median estimator with kernel smoothing post-processing described in Section~\ref{sec:postprocessing};
	(5) \textbf{LPR}: standard local polynomial regression;
	(6) \textbf{LBM+LPR}: local binning median estimator with local polynomial regression described in Section~\ref{sec:mult_dim}.
For all experiments, the hyperparameters are tuned to achieve the best performance.
For all figures, we only show 10\% observation points for better visualization.

In Figure~\ref{fig:d1beta05}, we consider estimating a one dimensional function with low smoothness.
We choose $\beta = 0.5$, $\epsilon=0.1$, $\rho = 0.2$ and $f(x) = L(x-\rho)^\beta$  for $x \le \rho$ and $f(x)=0$ otherwise. 
We let $Q$ be a Bernoulli distribution with half probability being $100$ and half probability being $-100$.
Figure~\ref{fig:d1beta05} shows our estimator is consistently better than other estimator.
Further when $L$ becomes bigger, truncated kernel estimator has worse performance, verifying our theoretical analysis in Section~\ref{sec:naive}.
On the other hand, local binning median estimator is not being affected.

In Figure~\ref{fig:d1beta15} we compare different estimators for estimating a one dimension function.
We use the same setup as in Figure~\ref{fig:d1beta05} except change the smoothness $\beta$ to $1.5$.
In this setting, naive algorithms like LPR and T-Kernel do not perform well while our proposed LBM+LPR and LBM+Ker give significant better results.

In Figure~\ref{fig:d2}, we consider estimating a peak function\footnote{https://www.mathworks.com/help/matlab/ref/peaks.html} using direct LPR and LBM+LPR.
Note the fitting by LPR is far from the true function whereas the estimation by our proposed LBM + LPR method is close to the truth.

Lastly, in Figure~\ref{fig:image_denoising} we explore our pre-processing procedure combining with other non-parametric estimator.
Here we consider the image denoising task where every pixel is subject to stochastic noise and a small amount of pixels are subject to adversarial noise.
Figure~\ref{fig:noisy_image} shows the noisy image.
In Figure~\ref{fig:directTV} we directly apply Total Variation de-noising algorithm~\citep{rudin1992nonlinear}.
However, due to the adversarial noise, there are still noisy points in the output image.
In Figure~\ref{fig:LBM_TV}, we first use local binning median then apply Total Variation de-noising algorithm.
Here, we successfully remove all adversarial noise.

\begin{figure*}[t!]
	\centering
	\begin{subfigure}[t]{0.25\textwidth}
		\includegraphics[width=\textwidth]{./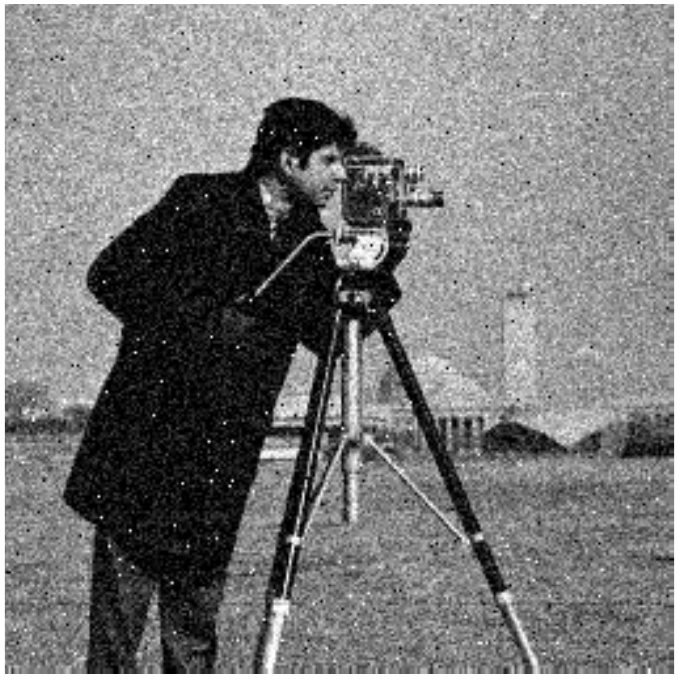}
		\caption{Noisy image. White points on the coat and black points in the background are adversarial points.}
		\label{fig:noisy_image}
	\end{subfigure}	
	\quad
	\begin{subfigure}[t]{0.25\textwidth}
		\includegraphics[width=\textwidth]{./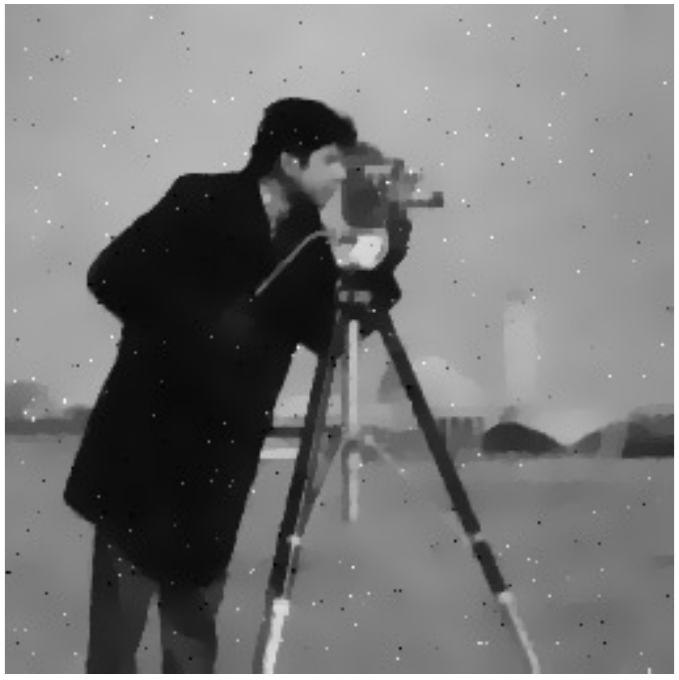}
		\caption{Direct total variation de-noising.}
		\label{fig:directTV}
	\end{subfigure}
	\quad
	\begin{subfigure}[t]{0.25\textwidth}
		\includegraphics[width=\textwidth]{./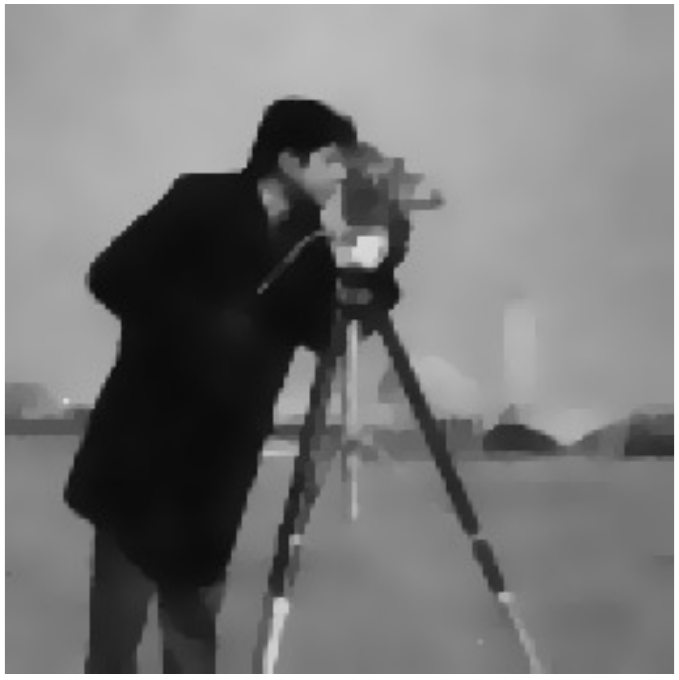}
		\caption{Local binning median followed total variation de-noising.}
		\label{fig:LBM_TV}
	\end{subfigure}
	\caption{Experimetns on image denoising.
	}
	\label{fig:image_denoising}
\end{figure*}


\bibliography{simonduref}

\begin{thebibliography}{42}
\providecommand{\natexlab}[1]{#1}
\providecommand{\url}[1]{\texttt{#1}}
\expandafter\ifx\csname urlstyle\endcsname\relax
  \providecommand{\doi}[1]{doi: #1}\else
  \providecommand{\doi}{doi: \begingroup \urlstyle{rm}\Url}\fi

\bibitem[Acharya et~al.(2017)Acharya, Diakonikolas, Li, and
  Schmidt]{acharya2017sample}
Jayadev Acharya, Ilias Diakonikolas, Jerry Li, and Ludwig Schmidt.
\newblock Sample-optimal density estimation in nearly-linear time.
\newblock In \emph{Proceedings of the Twenty-Eighth Annual ACM-SIAM Symposium
  on Discrete Algorithms}, pages 1278--1289. SIAM, 2017.

\bibitem[Balakrishnan et~al.(2017)Balakrishnan, Du, Li, and
  Singh]{balakrishnan2017computationally}
Sivaraman Balakrishnan, Simon~S Du, Jerry Li, and Aarti Singh.
\newblock Computationally efficient robust sparse estimation in high
  dimensions.
\newblock In \emph{Conference on Learning Theory}, pages 169--212, 2017.

\bibitem[Brown et~al.(2008)Brown, Cai, and Zhou]{brown2008robust}
Lawrence~D Brown, T~Tony Cai, and Harrison~H Zhou.
\newblock Robust nonparametric estimation via wavelet median regression.
\newblock \emph{The Annals of Statistics}, pages 2055--2084, 2008.

\bibitem[Cai et~al.(2009)Cai, Zhou, et~al.]{cai2009asymptotic}
T~Tony Cai, Harrison~H Zhou, et~al.
\newblock Asymptotic equivalence and adaptive estimation for robust
  nonparametric regression.
\newblock \emph{The Annals of Statistics}, 37\penalty0 (6A):\penalty0
  3204--3235, 2009.

\bibitem[Chan et~al.(2014)Chan, Diakonikolas, Servedio, and Sun]{chan2014near}
Siu~On Chan, Ilias Diakonikolas, Rocco~A Servedio, and Xiaorui Sun.
\newblock Near-optimal density estimation in near-linear time using
  variable-width histograms.
\newblock In \emph{Advances in Neural Information Processing Systems}, pages
  1844--1852, 2014.

\bibitem[Charikar et~al.(2017)Charikar, Steinhardt, and
  Valiant]{charikar2017learning}
Moses Charikar, Jacob Steinhardt, and Gregory Valiant.
\newblock Learning from untrusted data.
\newblock In \emph{Proceedings of the 49th Annual ACM SIGACT Symposium on
  Theory of Computing}, pages 47--60. ACM, 2017.

\bibitem[Chen et~al.(2015)Chen, Gao, and Ren]{chen2015robust}
Mengjie Chen, Chao Gao, and Zhao Ren.
\newblock Robust covariance matrix estimation via matrix depth.
\newblock \emph{arXiv preprint arXiv:1506.00691}, 2015.

\bibitem[Chen et~al.(2016)Chen, Gao, Ren, et~al.]{chen2016general}
Mengjie Chen, Chao Gao, Zhao Ren, et~al.
\newblock A general decision theory for huber’s $epsilon$-contamination
  model.
\newblock \emph{Electronic Journal of Statistics}, 10\penalty0 (2):\penalty0
  3752--3774, 2016.

\bibitem[Daskalakis et~al.(2012)Daskalakis, Diakonikolas, and
  Servedio]{daskalakis2012learning}
Constantinos Daskalakis, Ilias Diakonikolas, and Rocco~A Servedio.
\newblock Learning k-modal distributions via testing.
\newblock In \emph{Proceedings of the twenty-third annual ACM-SIAM symposium on
  Discrete Algorithms}, pages 1371--1385. Society for Industrial and Applied
  Mathematics, 2012.

\bibitem[Diakonikolas et~al.(2016{\natexlab{a}})Diakonikolas, Kamath, Kane, Li,
  Moitra, and Stewart]{diakonikolas2016robust}
Ilias Diakonikolas, Gautam Kamath, Daniel Kane, Jerry Li, Ankur Moitra, and
  Alistair Stewart.
\newblock Robust estimators in high dimensions without the computational
  intractability.
\newblock \emph{arXiv preprint arXiv:1604.06443}, 2016{\natexlab{a}}.

\bibitem[Diakonikolas et~al.(2016{\natexlab{b}})Diakonikolas, Kane, and
  Stewart]{diakonikolas2016efficient}
Ilias Diakonikolas, Daniel~M Kane, and Alistair Stewart.
\newblock Efficient robust proper learning of log-concave distributions.
\newblock \emph{arXiv preprint arXiv:1606.03077}, 2016{\natexlab{b}}.

\bibitem[Donoho and Johnstone(1994)]{donoho1994ideal}
David~L Donoho and Jain~M Johnstone.
\newblock Ideal spatial adaptation by wavelet shrinkage.
\newblock \emph{Biometrika}, 81\penalty0 (3):\penalty0 425--455, 1994.

\bibitem[Donoho et~al.(1998)Donoho, Johnstone, et~al.]{donoho1998minimax}
David~L Donoho, Iain~M Johnstone, et~al.
\newblock Minimax estimation via wavelet shrinkage.
\newblock \emph{The Annals of Statistics}, 26\penalty0 (3):\penalty0 879--921,
  1998.

\bibitem[Fan(1993)]{fan1993local}
Jianqing Fan.
\newblock Local linear regression smoothers and their minimax efficiencies.
\newblock \emph{The Annals of Statistics}, 21\penalty0 (1):\penalty0 196--216,
  1993.

\bibitem[Fan and Gijbels(1992)]{fan1992variable}
Jianqing Fan and Ir\`{e}ne Gijbels.
\newblock Variable bandwidth and local linear regression smoothers.
\newblock \emph{The Annals of Statistics}, 20\penalty0 (4):\penalty0
  2008--2036, 1992.

\bibitem[Fan and Gijbels(1996)]{fan1996local}
Jianqing Fan and Irene Gijbels.
\newblock \emph{Local polynomial modelling and its applications}.
\newblock CRC Press, 1996.

\bibitem[Fan et~al.(1994)Fan, Hu, and Truong]{fan1994robust}
Jianqing Fan, Tien-Chung Hu, and Young~K Truong.
\newblock Robust non-parametric function estimation.
\newblock \emph{Scandinavian journal of statistics}, pages 433--446, 1994.

\bibitem[Friedman et~al.(2001)Friedman, Hastie, and
  Tibshirani]{friedman2001elements}
Jerome Friedman, Trevor Hastie, and Robert Tibshirani.
\newblock \emph{The elements of statistical learning}, volume~1.
\newblock Springer series in statistics New York, 2001.

\bibitem[Gao(2017)]{gao2017robust}
Chao Gao.
\newblock Robust regression via mutivariate regression depth.
\newblock \emph{arXiv preprint arXiv:1702.04656}, 2017.

\bibitem[Geer(2000)]{geer2000empirical}
Sara~A Geer.
\newblock \emph{Empirical Processes in M-estimation}, volume~6.
\newblock Cambridge university press, 2000.

\bibitem[Green and Silverman(1993)]{green1993nonparametric}
Peter~J Green and Bernard~W Silverman.
\newblock \emph{Nonparametric regression and generalized linear models: a
  roughness penalty approach}.
\newblock CRC Press, 1993.

\bibitem[Gy{\"o}rfi et~al.(2006)Gy{\"o}rfi, Kohler, Krzyzak, and
  Walk]{gyorfi2006distribution}
L{\'a}szl{\'o} Gy{\"o}rfi, Michael Kohler, Adam Krzyzak, and Harro Walk.
\newblock \emph{A distribution-free theory of nonparametric regression}.
\newblock Springer Science \& Business Media, 2006.

\bibitem[Hampel et~al.(2011)Hampel, Ronchetti, Rousseeuw, and
  Stahel]{hampel2011robust}
Frank~R Hampel, Elvezio~M Ronchetti, Peter~J Rousseeuw, and Werner~A Stahel.
\newblock \emph{Robust statistics: the approach based on influence functions},
  volume 114.
\newblock John Wiley \& Sons, 2011.

\bibitem[H{\"a}rdle et~al.(2012)H{\"a}rdle, Kerkyacharian, Picard, and
  Tsybakov]{hardle2012wavelets}
Wolfgang H{\"a}rdle, Gerard Kerkyacharian, Dominique Picard, and Alexander
  Tsybakov.
\newblock \emph{Wavelets, approximation, and statistical applications}, volume
  129.
\newblock Springer Science \& Business Media, 2012.

\bibitem[Hastings~Jr et~al.(1947)Hastings~Jr, Mosteller, Tukey, and
  Winsor]{hastings1947low}
Cecil Hastings~Jr, Frederick Mosteller, John~W Tukey, and Charles~P Winsor.
\newblock Low moments for small samples: a comparative study of order
  statistics.
\newblock \emph{The Annals of Mathematical Statistics}, pages 413--426, 1947.

\bibitem[Huber(2011)]{huber2011robust}
Peter~J Huber.
\newblock \emph{Robust statistics}.
\newblock Springer, 2011.

\bibitem[Huber et~al.(1964)]{huber1964robust}
Peter~J Huber et~al.
\newblock Robust estimation of a location parameter.
\newblock \emph{The Annals of Mathematical Statistics}, 35\penalty0
  (1):\penalty0 73--101, 1964.

\bibitem[Huber et~al.(1965)]{huber1965robust}
Peter~J Huber et~al.
\newblock A robust version of the probability ratio test.
\newblock \emph{The Annals of Mathematical Statistics}, 36\penalty0
  (6):\penalty0 1753--1758, 1965.

\bibitem[Kleinberg and Tardos(2006)]{kleinberg2006algorithm}
Jon Kleinberg and Eva Tardos.
\newblock \emph{Algorithm design}.
\newblock Pearson Education India, 2006.

\bibitem[Lai et~al.(2016)Lai, Rao, and Vempala]{lai2016agnostic}
Kevin~A Lai, Anup~B Rao, and Santosh Vempala.
\newblock Agnostic estimation of mean and covariance.
\newblock \emph{arXiv preprint arXiv:1604.06968}, 2016.

\bibitem[Larry(2006)]{larry2006all}
Wasserman Larry.
\newblock \emph{All of nonparametric statistics}.
\newblock Springer Texts in Statistics. New York: Springer Science+ Business
  Media, 2006.

\bibitem[Liu and Gao(2017)]{liu2017density}
Haoyang Liu and Chao Gao.
\newblock Density estimation with contaminated data: Minimax rates and theory
  of adaptation.
\newblock \emph{arXiv preprint arXiv:1712.07801}, 2017.

\bibitem[Nemirovski(2000)]{nemirovski2000topics}
Arkadi Nemirovski.
\newblock Topics in non-parametric.
\newblock 2000.

\bibitem[Reinsch(1967)]{reinsch1967smoothing}
Christian~H Reinsch.
\newblock Smoothing by spline functions.
\newblock \emph{Numerische Mathematik}, 10\penalty0 (3):\penalty0 177--183,
  1967.

\bibitem[Rudin et~al.(1992)Rudin, Osher, and Fatemi]{rudin1992nonlinear}
Leonid~I Rudin, Stanley Osher, and Emad Fatemi.
\newblock Nonlinear total variation based noise removal algorithms.
\newblock \emph{Physica D: nonlinear phenomena}, 60\penalty0 (1-4):\penalty0
  259--268, 1992.

\bibitem[Ruppert(2011)]{ruppert2011statistics}
David Ruppert.
\newblock \emph{Statistics and data analysis for financial engineering},
  volume~13.
\newblock Springer, 2011.

\bibitem[Tsybakov(2009)]{tsybakov2009introduction}
Alexandre~B Tsybakov.
\newblock \emph{Introduction to nonparametric estimation.}
\newblock Springer Series in Statistics. Springer, New York, 2009.

\bibitem[Tukey(1975)]{tukey1975mathematics}
John~W Tukey.
\newblock Mathematics and the picturing of data.
\newblock In \emph{Proceedings of the international congress of
  mathematicians}, volume~2, pages 523--531, 1975.

\bibitem[Wang et~al.(2008)Wang, Brown, Cai, and Levine]{wang2008effect}
Lie Wang, Lawrence~D Brown, T~Tony Cai, and Michael Levine.
\newblock Effect of mean on variance function estimation in nonparametric
  regression.
\newblock \emph{The Annals of Statistics}, pages 646--664, 2008.

\bibitem[Wang et~al.(2018)Wang, Balakrishnan, and Singh]{wang2018optimization}
Yining Wang, Sivaraman Balakrishnan, and Aarti Singh.
\newblock Optimization of smooth functions with noisy observations: Local
  minimax rates.
\newblock \emph{arXiv preprint arXiv:1803.08586}, 2018.

\bibitem[Whittaker(1922)]{whittaker1922new}
Edmund~T Whittaker.
\newblock On a new method of graduation.
\newblock \emph{Proceedings of the Edinburgh Mathematical Society},
  41:\penalty0 63--75, 1922.

\bibitem[Yatracos et~al.(1985)]{yatracos1985rates}
Yannis~G Yatracos et~al.
\newblock Rates of convergence of minimum distance estimators and kolmogorov's
  entropy.
\newblock \emph{The Annals of Statistics}, 13\penalty0 (2):\penalty0 768--774,
  1985.

\end{thebibliography}
\bibliographystyle{plainnat}
\onecolumn
\newpage
\appendix
\section{Proofs}
\subsection{Useful Lemmas}
\label{sec:useful_lemmas}
We first establish the following lemma that provides the key bias-variance decomposition of the local binning median estimator.
The main component is a deterministic analysis on the median and the noise structure.
\begin{lem}\label{lem:fixed_design_median}
	Denote $z_\vect{j} = \median\left\{y_\vect{i}\right\}_{\vect{i} \in \text{ bin }\vect{j}}$ as the median estimator in the bin $\vect{j}$. 
	Then $z_\vect{j}$ can be written as $z_\vect{j} = f(\vect{j}/m) + \eta_\vect{j} + \Delta_\vect{j}$, 
	where \begin{itemize}
		\item $\eta_\vect{j} = \median\left\{\xi_\vect{i}\right\}_{\vect{i} \in \text{ bin }\vect{j}}$ where $\xi_{\vect{i}} \sim (1-\epsilon)N(0,1)+\epsilon (Q(x_{\vect{i}})-f(\frac{\vect{i}}{n}))$\footnote{Here$Q(x_{\vect{i}})-f(\frac{\vect{i}}{n})$ is a distribution that shifts $Q$ by $-f(\frac{\vect{i}}{n})$.}, and
		\item $\abs{\triangle_\vect{j}} \leq\max_{\vect{i} \in \text{ bin }\vect{j}} \abs{f(\frac{\vect{i}}{n})-f(\frac{\vect{j}}{n})}$ almost surely.
	\end{itemize} 
\end{lem}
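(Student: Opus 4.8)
The plan is to reduce everything to two elementary, purely deterministic properties of the sample median --- translation equivariance and monotonicity under pointwise domination --- while conditioning throughout on the realized observations $\{y_\vect{i}\}$. First I would anchor each observation in bin $\vect{j}$ at the value $f(\vect{j}/m)$: writing $\delta_\vect{i} := f(x_\vect{i}) - f(\vect{j}/m)$ and using $y_\vect{i} = f(x_\vect{i}) + \xi_\vect{i}$, we have $y_\vect{i} = f(\vect{j}/m) + \delta_\vect{i} + \xi_\vect{i}$ for every $\vect{i}$ in bin $\vect{j}$, and since all such design points $x_\vect{i}$ lie inside bin $\vect{j}$, $\abs{\delta_\vect{i}} \le B_\vect{j} := \max_{\vect{i}' \in \text{ bin }\vect{j}} \abs{f(x_{\vect{i}'}) - f(\vect{j}/m)}$.

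Next, because the median satisfies $\median\{c + u_\vect{i}\} = c + \median\{u_\vect{i}\}$ for any constant $c$, pulling $f(\vect{j}/m)$ out gives $z_\vect{j} = f(\vect{j}/m) + \median\{\delta_\vect{i} + \xi_\vect{i}\}_{\vect{i} \in \text{ bin }\vect{j}}$. I would then \emph{define} $\eta_\vect{j} := \median\{\xi_\vect{i}\}_{\vect{i} \in \text{ bin }\vect{j}}$, matching the statement, together with $\Delta_\vect{j} := \median\{\delta_\vect{i} + \xi_\vect{i}\}_{\vect{i} \in \text{ bin }\vect{j}} - \eta_\vect{j}$, so the claimed decomposition $z_\vect{j} = f(\vect{j}/m) + \eta_\vect{j} + \Delta_\vect{j}$ holds by construction. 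The law of $\xi_\vect{i}$ is immediate from the contamination model: $y_\vect{i} \sim (1-\epsilon)N(f(x_\vect{i}),1) + \epsilon Q(x_\vect{i})$ forces $\xi_\vect{i} = y_\vect{i} - f(x_\vect{i}) \sim (1-\epsilon)N(0,1) + \epsilon\,(Q(x_\vect{i}) - f(x_\vect{i}))$, which is exactly the stated mixture.

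The only real content --- the ``deterministic analysis on the median and the noise structure'' advertised in the statement --- is the almost-sure bound $\abs{\Delta_\vect{j}} \le B_\vect{j}$. For this I would establish the one-line fact: for finite lists $\{a_\vect{i}\}$ and $\{b_\vect{i}\}$ of the same length with $\abs{a_\vect{i} - b_\vect{i}} \le B$ for all $\vect{i}$, one has $\abs{\median\{a_\vect{i}\} - \median\{b_\vect{i}\}} \le B$. Indeed $a_\vect{i} \le b_\vect{i} + B$ pointwise, and since the median is an order-preserving functional of the sorted list (each order statistic is monotone in the entries), $\median\{a_\vect{i}\} \le \median\{b_\vect{i} + B\} = \median\{b_\vect{i}\} + B$; the symmetric inequality gives the matching lower bound. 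Applying this with $a_\vect{i} = \delta_\vect{i} + \xi_\vect{i}$, $b_\vect{i} = \xi_\vect{i}$, and $B = B_\vect{j}$ yields $\abs{\Delta_\vect{j}} \le B_\vect{j}$, which is the asserted inequality (modulo identifying the anchor written $\vect{j}/m$ in the decomposition with the point written $\vect{j}/n$ in the bound, and $x_\vect{i}$ with $\vect{i}/n$).

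I do not expect a genuine obstacle here: no concentration or probabilistic argument is needed at all, since everything is a deterministic consequence of the definition of the median once the observations are fixed. The one point deserving a little care is robustness to the tie-breaking convention when the bin size $s$ is even (lower median, upper median, or the average of the two central order statistics): the pointwise-domination argument above applies verbatim to each, since every one of these choices is order-preserving in the sorted entries.
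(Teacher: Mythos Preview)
Your proposal is correct and follows essentially the same route as the paper: both arguments define $\eta_\vect{j}$ as the median of the centered noises, set $\Delta_\vect{j} := z_\vect{j} - f(\vect{j}/m) - \eta_\vect{j}$, and bound $\abs{\Delta_\vect{j}}$ by the deterministic sandwiching property of the median under pointwise domination (the paper isolates this as a separate proposition, $\min\{a_i\} \le \median\{a_i+b_i\} - \median\{b_i\} \le \max\{a_i\}$, which is exactly your ``one-line fact'' in additive form). Your remarks on tie-breaking and on the $\vect{j}/m$ versus $\vect{j}/n$ typo are apt but do not change the substance.
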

To prove Lemma \ref{lem:fixed_design_median}, we need the following sandwiching inequality for the median operator.
\begin{prop}
For any sequences $\{a_i\}_i$ and $\{b_i\}_i$ of equal length, it holds that
$
\min\{a_i\}_i \leq \median\{a_i+b_i\}_i - \median\{b_i\}_i \leq \max\{a_i\}_i.
$
\label{prop:median_difference}
\end{prop}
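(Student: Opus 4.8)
The plan is to reduce the statement to two elementary order-preserving properties of the median operator: translation equivariance and monotonicity. Throughout, let $n$ be the common length of the two sequences and fix any standard convention for $\median$ (for instance, the $\lceil n/2\rceil$-th smallest value, or the average of the two central order statistics when $n$ is even); the argument below goes through verbatim for either choice.

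First I would record translation equivariance: for any real constant $t$, the sorted list of $\{b_i+t\}_i$ is exactly the sorted list of $\{b_i\}_i$ with every entry increased by $t$, so $\median\{b_i+t\}_i = \median\{b_i\}_i + t$. Next, monotonicity: if $c_i \le d_i$ for all $i$, then for each rank $k$ the $k$-th smallest value of $\{c_i\}_i$ is at most the $k$-th smallest value of $\{d_i\}_i$, and in particular $\median\{c_i\}_i \le \median\{d_i\}_i$. If the median is taken to be an average of two order statistics, both facts still hold, since averaging preserves shifts and respects order.

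With these two properties in hand the proposition is immediate. For the upper bound, set $M := \max_i a_i$; then $a_i + b_i \le M + b_i$ for every $i$, so monotonicity followed by translation equivariance yields $\median\{a_i+b_i\}_i \le \median\{M+b_i\}_i = M + \median\{b_i\}_i$, which rearranges to $\median\{a_i+b_i\}_i - \median\{b_i\}_i \le \max_i a_i$. The lower bound is symmetric: with $\mu := \min_i a_i$ we have $a_i + b_i \ge \mu + b_i$ for each $i$, hence $\median\{a_i+b_i\}_i \ge \mu + \median\{b_i\}_i$, i.e. $\median\{a_i+b_i\}_i - \median\{b_i\}_i \ge \min_i a_i$.

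I do not anticipate any substantive obstacle here; the only point requiring a moment's care is to commit to a single convention for the median when $n$ is even and to check that translation equivariance and monotonicity both survive that convention, which they do for the usual ``average of the two middle values'' rule.
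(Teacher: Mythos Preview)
Your proof is correct and follows essentially the same approach as the paper: both arguments bound $a_i+b_i$ above and below by $\max_i a_i + b_i$ and $\min_i a_i + b_i$, then invoke monotonicity and translation equivariance of the median. Your version is in fact more explicit, since you state these two properties separately and address the even-$n$ convention, whereas the paper simply writes the chain $\median\{Z_i+\xi_i\}\ge \median\{\min Z_i + \xi_i\} = \min Z_i + \median\{\xi_i\}$ without further comment.
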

\begin{proof}
	\begin{align*}
	&\median\left(\left\{Y_i\right\}_{i=1}^m\right)-\median \left(\left\{\xi_i\right\}_{i=1}^m\right) \\
	=& \median\left(\left\{Z_i+\xi_i\right\}_{i=1}^m\right)-\median \left(\left\{\xi_i\right\}_{i=1}^m\right) \\
	\ge & \median\left(\min Z_i+\left\{\xi_i\right\}_{i=1}^m\right)-\median \left(\left\{\xi_i\right\}_{i=1}^m\right)  \\
	= &\min Z_i.
	\end{align*}
	Using similar argument we can prove the other direction.
\end{proof}

\begin{proof}[Proof of Lemma~\ref{lem:fixed_design_median}]
	Recall we can write the observation model as\[
	y_\vect{i} = f(\vect{i}/n)+\xi_\vect{i} \]where $\xi_i \sim \left(1-\epsilon\right) N\left(0,1\right)+\epsilon Q\left(x_{\vect{i}}-f(\frac{\vect{i}}{n})\right)$.
	Therefore,
	\begin{align*}
	y_\vect{i} - f(\vect{j}/m) = \left(f(\vect{i}/n)-f(\vect{j}/m)\right)  + \xi_\vect{i}.
	\end{align*}
	Denote $\eta_\vect{j} = \median \left\{\xi_i\right\}_{\vect{i} \in \text{ bin }\vect{j}}$.
	Applying Proposition~\ref{prop:median_difference}  and noting that $z_\vect{j}=\median\{y_i\}_{\vect{i} \in \text{ bin }\vect{j}}$, we have \begin{align*}
	\min_{\vect{i} \in \text{ bin }\vect{j}}f\left(\vect{i}/n\right)\le z_\vect{j} - \eta_\vect{j} \le \max_{\vect{i} \in \text{ bin }\vect{j}}f\left(\vect{i}/n\right).
	\end{align*}
	Define $\triangle_\vect{j} = z_\vect{j} - \eta_\vect{j} - f\left(\frac{\vect{j}}{m}\right)$.
We have
	\begin{align*}
	\abs{\triangle_j} \le \max_{\vect{i} \in \text{ bin }\vect{j}} \abs{f(\vect{i}/n)-f(\vect{j}/m)}
	\end{align*}
	almost surely. The lemma is thus proved.
\end{proof}

To analyze $\eta_\vect{j}$, we use a decoupled analysis for the adversarial noise and the stochastic noise.
Suppose out of the $s$ samples in the $\vect{j}$-th bin, $s_\vect{j}$ observations come from the adversarial noise distribution $Q$.
Our key lemma shows that if  $s_\vect{j} \le s/4$, these adversarial observations incur a small amount of additional bias in the median over $\{\xi_\vect{i}\}_{\vect{i} \in \text{ bin }\vect{j}}$.

\begin{lem}\label{lem:worst_case_adv}
	Suppose $0 \le s' < s/4$. Let $\tilde\xi_1,\ldots,\tilde\xi_{s-s'}$ be fixed and $\tilde\xi_{s-s'+1},\ldots,\tilde\xi_{s}$ be arbitrary,
	corresponding to the $s$ noise variables $\{\xi_i\}_{i=(j-1)s+1}^{js}$.
	Then 
	\begin{align*}
	\sup_{\tilde\xi_{s-s'+1},\ldots,\tilde\xi_{s}}\median\{\tilde\xi_i \}_{i=1}^s &\le \tilde\xi_{(\frac{s}{2}:s-s')};\\
	\inf_{\tilde\xi_{s-s'+1},\ldots,\tilde\xi_{s}}\median\{\tilde\xi_i \}_{i=1}^s &\ge \tilde\xi_{(\frac{s-2s'}{2}:s-s')},
	\end{align*} where $\tilde\xi_{(\frac{s}{2}:s-s')}$ and $\tilde\xi_{(\frac{s-2s'}{2}:s-s')}$ are the $\frac{s}{2}$-th and the $\frac{s-2s'}{2}$-th largest elements in $\tilde\xi_1,\ldots,\tilde\xi_{s-s'}$, respectively. 
\end{lem}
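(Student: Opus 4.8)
The plan is to prove Lemma~\ref{lem:worst_case_adv} by a direct combinatorial argument about how a controlled number of arbitrary insertions can move an order statistic, with no probability involved. Write $t = s - s'$ for the number of fixed values and note $s' < s/4$, so the fixed values form a clear majority. I would first recall the convention that $\median\{\tilde\xi_i\}_{i=1}^s$ is (say) the $\lceil s/2\rceil$-th largest of the $s$ values; it suffices to track where that rank lands among the fixed values after the adversarial values are inserted.

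For the upper bound, I would argue that inserting $s'$ arbitrary values among the $t$ fixed values can push a given element of the combined list to a higher position by at most $s'$ ranks, but to maximize the median an adversary does best to place all $s'$ inserted values \emph{below} the fixed value we care about, effectively shifting the $(s/2)$-th largest position in the full list down to the $(s/2)$-th largest among the fixed values. Concretely: the $(s/2)$-th largest of all $s$ values is at most the $(s/2 - s')$-th largest of the fixed $t$ values if the adversary puts all its mass high, but at most the $(s/2)$-th largest of the fixed values regardless — and since $s/2 \le t = s - s'$ (because $s' < s/4 \le s/2$), this rank is well-defined, giving $\median\{\tilde\xi_i\}_{i=1}^s \le \tilde\xi_{(s/2 : t)}$. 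For the lower bound, by the symmetric argument the adversary minimizes the median by placing all $s'$ inserted values \emph{above} the target, which shifts the $(s/2)$-th largest of the full list up so that it is at least the $(s/2 - s')$-th largest... I need to recompute: placing $s'$ values above means among the fixed values we are now looking at rank $s/2 - s'$ from the top, but the statement claims rank $(s-2s')/2 = s/2 - s'$, which matches, so $\median\{\tilde\xi_i\}_{i=1}^s \ge \tilde\xi_{((s-2s')/2 : t)}$. (A cleaner route is to invoke Proposition~\ref{prop:median_difference} after writing the adversarial insertion as a perturbation, but the bare order-statistic bookkeeping is more transparent here.)

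The key steps in order: (i) fix the median convention and set $t = s-s'$; (ii) observe $s' < s/4$ forces $s/2 \le t$ and $s/2 - s' \ge s/4 > 0$, so both claimed order statistics of the fixed sequence exist; (iii) prove the ``monotonicity of order statistics under insertion'' fact — adding one element to a list can only move any fixed element's rank-from-top up by at most one — and iterate $s'$ times; (iv) for the sup, bound the $(s/2)$-th largest of the size-$s$ list above by the $(s/2)$-th largest of the fixed sublist (worst case: all insertions placed low); (v) for the inf, bound it below by the $(s/2 - s')$-th largest of the fixed sublist (worst case: all insertions placed high); (vi) identify $s/2 - s' = (s-2s')/2$ to match the stated indices.

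The main obstacle I anticipate is purely notational/careful-counting: getting the off-by-one and parity issues exactly right (e.g.\ whether the median is the $s/2$-th or $(s/2+1)$-th largest for even $s$, and whether insertions at ties count as above or below), and making sure the extremal configurations — adversary dumps all mass below vs.\ above — are genuinely the worst cases rather than just plausible ones. I would handle this by proving a clean single-insertion lemma (one arbitrary element changes any order statistic of the resulting list to one of two adjacent order statistics of the original list) and then composing it $s'$ times, which makes the extremal analysis automatic and sidesteps having to reason about all $\binom{s}{s'}$ interleavings at once.
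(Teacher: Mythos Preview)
Your overall strategy---identify the extremal placement of the $s'$ arbitrary values and read off which order statistic of the fixed sublist the median lands on---is exactly the paper's approach, and the paper's proof is literally one sentence: the median is maximized by placing all arbitrary values above $\max\{\tilde\xi_i\}_{i=1}^{s-s'}$, and symmetrically minimized by placing them all below $\min\{\tilde\xi_i\}_{i=1}^{s-s'}$. No single-insertion lemma or iterated rank-shift argument is needed.

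However, your extremal directions are swapped, and this is a genuine error, not just an off-by-one. To \emph{maximize} the median you must push the arbitrary values \emph{high}, not low: if all $s'$ arbitrary values sit above every fixed value, then the bottom $s-s'$ positions of the combined sorted list are exactly the fixed values in order, so the $(s/2)$-th smallest of the combined list equals the $(s/2)$-th smallest of the fixed sublist, giving $\sup\median = \tilde\xi_{(s/2:\,s-s')}$. Conversely, placing all arbitrary values low \emph{minimizes} the median, yielding the $(s/2 - s')$-th smallest of the fixed sublist, i.e., $\tilde\xi_{((s-2s')/2:\,s-s')}$. Your steps (iv)--(v) assert the opposite pairing. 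Part of the confusion may stem from the lemma's wording: despite saying ``largest,'' the notation $\tilde\xi_{(k:n)}$ denotes the standard $k$-th order statistic (the $k$-th \emph{smallest}); the stated inequalities only make sense under that reading, since otherwise the claimed inf bound would exceed the sup bound. Once you correct the direction, your outline collapses to the paper's one-line argument.
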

\begin{proof}[Proof of Lemma~\ref{lem:worst_case_adv}]
	The median is maximized by setting $\xi_{s-s'+1},\ldots,\xi_{s} \ge \max\left\{\xi_i\right\}_{i=1}^{s-s'}$ and this gives us the first inequality.
	The second inequality can be proved in a similar manner.
\end{proof}

As a corollary, conditioned on the event that $s_j<s/4$, 
the bias and variance in $\eta_j$ can be upper bounded,
following standard properties of the order statistics~\citep{ruppert2011statistics}.
\begin{cor}
Suppose $s_\vect{j}<s/4$ for all $\vect{j}\in[m]^d$. Then there exists an absolute constant $C>0$ such that for all $\vect{j}$, 
\begin{equation*}
\big|\expect{\eta_j}\big| \leq Cs_\vect{j}/s \;\;\;\text{and}\;\;\; \variance[\eta_\vect{j}] \leq C/s.
\end{equation*}
\label{cor:quantile-gaussian}
\end{cor}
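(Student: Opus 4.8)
The plan is to fix a bin $\vect j$ and condition on the entire contamination pattern in that bin --- which of the $s$ indices are adversarial and what the realized adversarial values $v_1,\dots,v_{s_{\vect j}}$ are --- so that the remaining $m:=s-s_{\vect j}$ clean noise variables $X_1,\dots,X_m$ are i.i.d.\ $N(0,1)$ and the statement reduces to a deterministic-plus-order-statistics claim. Since the bounds we will derive depend only on the count $s_{\vect j}$ (and hold uniformly over the adversarial values), averaging back over the adversarial draws is harmless; moreover the event $\{s_{\vect j}<s/4\ \forall\vect j\}$ is measurable with respect to the contamination pattern alone, hence independent of the clean draws, so conditioning on it does not disturb the Gaussian distribution of the $X_i$. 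Write $X_{(1:m)}\ge\cdots\ge X_{(m:m)}$ for the clean order statistics. By Lemma~\ref{lem:worst_case_adv}, for arbitrary $v_1,\dots,v_{s_{\vect j}}$ the combined-sample median satisfies $X_{(b:m)}\le\eta_{\vect j}\le X_{(a:m)}$, where the ranks $a,b$ lie within $s_{\vect j}/2+O(1)$ of the central rank $m/2$; because $s_{\vect j}<s/4$ forces $m\in(3s/4,s)$, both ratios $a/m,b/m$ stay inside a fixed subinterval of $(0,1)$, i.e.\ both sandwiching order statistics are ``in the bulk''.

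For the expectation, I would invoke the standard estimate for Gaussian order statistics in the bulk, $|\E X_{(k:m)}|\le C(|k/m-1/2|+1/m)$, which follows from $\E X_{(k:m)}=\Phi^{-1}(k/(m+1))+O(1/m)$ together with the local Lipschitz bound $|\Phi^{-1}(q)-\Phi^{-1}(1/2)|=O(|q-1/2|)$ on a bulk interval. Monotonicity of the sandwich gives $\E X_{(b:m)}\le\E\eta_{\vect j}\le\E X_{(a:m)}$, hence $|\E\eta_{\vect j}|\le C(s_{\vect j}/s+1/s)$; the spurious $1/s$ is absorbed into $C s_{\vect j}/s$ whenever $s_{\vect j}\ge 1$, and when $s_{\vect j}=0$ the median of i.i.d.\ symmetric variables has mean exactly $0$, so $|\E\eta_{\vect j}|\le C s_{\vect j}/s$ as claimed.

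The variance is the delicate part, and the order-statistic sandwich alone does \emph{not} suffice: when $s_{\vect j}\asymp s$ the interval $[X_{(b:m)},X_{(a:m)}]$ has width of \emph{constant} order (its endpoints differ by roughly $\Phi^{-1}(2/3)-\Phi^{-1}(1/3)$), so one cannot extract an $O(1/s)$ fluctuation from it. Instead I would use that $\eta_{\vect j}=\inf\{t:F_m(t)\ge\rho(t)\}$, where $F_m$ is the clean empirical c.d.f.\ and $\rho(t)=(s/2-s_{\vect j}\hat G(t))/m$ is a deterministic non-increasing function with values in the bulk interval $[(s/2-s_{\vect j})/m,\ (s/2)/m]$, $\hat G$ being the empirical c.d.f.\ of $v_1,\dots,v_{s_{\vect j}}$. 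Let $t^\star$ be the unique root of $\Phi(t^\star)=\rho(t^\star)$; one checks $t^\star\in(-1,1)$, and since $t\mapsto\Phi(t)-\rho(t)$ has derivative at least $\phi(t)\ge\phi(1)$ on a fixed neighborhood of $t^\star$, a short argument gives $|\eta_{\vect j}-t^\star|\le\phi(1)^{-1}\|F_m-\Phi\|_\infty$ on the event that $\|F_m-\Phi\|_\infty$ is below a fixed constant.

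On that event, $\E[(\eta_{\vect j}-t^\star)^2]\le\phi(1)^{-2}\,\E\|F_m-\Phi\|_\infty^2\le C/m$ by the Dvoretzky--Kiefer--Wolfowitz inequality (integrating its sub-Gaussian tail), while on the exponentially unlikely complement I would bound $|\eta_{\vect j}|\le\max_i|X_i|$ using the deterministic sandwich and control the contribution by Cauchy--Schwarz; altogether $\mathrm{Var}(\eta_{\vect j})\le\E[(\eta_{\vect j}-t^\star)^2]\le C/s$. Finally, since both the bias and variance bounds depend on the contamination pattern only through $s_{\vect j}$, they pass through to the conditional statement in the corollary. The main obstacle is precisely the variance step: replacing the constant-width order-statistic sandwich by the ``Lipschitz in $\|F_m-\Phi\|_\infty$'' control above is what turns an $O(1)$ bound into the required $O(1/s)$.
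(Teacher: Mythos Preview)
Your proposal is correct, and in fact considerably more complete than what the paper offers: the paper gives no proof of this corollary at all, simply asserting that it ``follows from standard properties of the order statistics'' and citing a textbook. Your conditioning-on-the-contamination-pattern setup and the expectation argument via the sandwich of Lemma~\ref{lem:worst_case_adv} together with the bulk quantile expansion $\E X_{(k:m)}=\Phi^{-1}(k/(m+1))+O(1/m)$ is exactly the kind of reasoning that justifies the bias claim, and your treatment of the $s_{\vect j}=0$ case by symmetry is clean.

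The most valuable part of your proposal is the variance step, where you correctly identify a gap that the paper's one-line justification papers over: the sandwich $X_{(b:m)}\le\eta_{\vect j}\le X_{(a:m)}$ has endpoints whose ranks differ by roughly $s_{\vect j}$, so when $s_{\vect j}$ is a constant fraction of $s$ the interval width is of constant order and the sandwich alone cannot deliver an $O(1/s)$ variance. Your fix --- writing $\eta_{\vect j}$ as the crossing point of the clean empirical c.d.f.\ $F_m$ with the deterministic, bulk-valued target $\rho(t)=(s/2-s_{\vect j}\hat G(t))/m$, linearizing around the population crossing $t^\star$ using the lower bound $\phi(t)\ge\phi(1)$ on a fixed neighborhood, and then invoking DKW to get $\E\|F_m-\Phi\|_\infty^2=O(1/m)$ --- is exactly the right Bahadur-type argument, and the Cauchy--Schwarz handling of the exponentially small complementary event is the standard way to close it. This is genuinely more careful than the paper's treatment; the paper's citation presumably points to results of this flavor, but does not spell out why the naive sandwich fails and the quantile-process argument is needed.
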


Both Lemma \ref{lem:worst_case_adv} and Corollary \ref{cor:quantile-gaussian} depend crucially on the condition $s_\vect{j}<s/4$, that at most one quarter of the observations within each local bin are corrupted by adversarial noise.
This is likely to be satisfies when $\epsilon$ is not too large (e.g., $\epsilon\ll 1/4$) because adversarial noise samples are uniformly distributed across all samples.
The following lemma gives a rigorous statement of the above intuition: 
\begin{lem}[Uniform Upper Bound for $s_j$s]\label{lem:uniform_upper_sj}
With high probability, for all $\vect{j} \in [m]^d$ we have \begin{align*}
	s_{\vect{j}} \le C\left(s\epsilon + \log m\right)
\end{align*}
for some $C>0$.
\end{lem}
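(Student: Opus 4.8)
The plan is to treat each $s_\vect{j}$ as a sum of independent indicator random variables and apply a multiplicative Chernoff bound, then take a union bound over the $m^d$ bins. First I would fix a bin $\vect{j}\in[m]^d$. For each design point $\vect{i}$ in bin $\vect{j}$, let $B_\vect{i}$ be the indicator that the $\vect{i}$-th observation is drawn from the adversarial component $Q(x_\vect{i})$ rather than the benign component. By the definition of Huber's $\epsilon$-contamination model, the $\{B_\vect{i}\}$ are independent $\bern(\epsilon)$ variables, and $s_\vect{j}=\sum_{\vect{i}\in\text{ bin }\vect{j}}B_\vect{i}$, so $\expect{s_\vect{j}}=s\epsilon$. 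The multiplicative Chernoff bound gives, for any $t\geq 1$,
\begin{align*}
\Pr\left[s_\vect{j}\geq (1+t)s\epsilon\right]\leq \exp\left(-\frac{t^2}{2+t}s\epsilon\right)\leq \exp\left(-\frac{t\,s\epsilon}{3}\right).
\end{align*}
I would then choose $t$ so that $(1+t)s\epsilon$ is on the order of $s\epsilon+\log m$ and the failure probability is at most, say, $m^{-d}/20$. Concretely, taking $(1+t)s\epsilon = C(s\epsilon+\log m)$ for a suitable absolute constant $C$ makes $t\,s\epsilon\gtrsim \log m$, so the right-hand side is at most $\exp(-c'\log m^d)$ for a constant $c'$ that can be made as large as needed by enlarging $C$; this handles the regime $s\epsilon\gtrsim\log m$. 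In the complementary regime $s\epsilon\lesssim\log m$, I would instead use the upper-tail bound $\Pr[s_\vect{j}\geq k]\leq \exp(-k)$-type estimate (valid once $k$ exceeds a constant multiple of the mean), applied with $k\asymp\log m$, to get the same conclusion $s_\vect{j}\leq C\log m\leq C(s\epsilon+\log m)$ with probability at least $1-m^{-d}/20$.

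Having established that a single $s_\vect{j}$ fails the bound with probability at most $m^{-d}/20$, I would finish with a union bound over all $\vect{j}\in[m]^d$: the probability that \emph{some} bin violates $s_\vect{j}\leq C(s\epsilon+\log m)$ is at most $m^d\cdot m^{-d}/20 = 1/20$, so with probability at least $19/20$ the bound holds simultaneously for all bins. (The precise constant in the high-probability statement can be absorbed into the ``$0.9$'' figures appearing in Lemma~\ref{lem:meta_lemma_uniform_mult} and the theorems; one can tune $C$ to drive the union-bound failure probability below any fixed threshold.)

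The only mildly delicate point is making the Chernoff exponent dominate $d\log m$ uniformly in the two regimes of $\epsilon$ relative to $(\log m)/s$ — i.e. verifying that the single choice $C(s\epsilon+\log m)$ works both when the mean $s\epsilon$ is the dominant term and when $\log m$ is, and checking that the $t\geq 1$ (equivalently $k\geq 2\expect{s_\vect{j}}$) hypothesis of the simplified Chernoff bound is actually satisfied by that choice. This is routine case analysis rather than a real obstacle; the core of the argument is just independence of the contamination indicators plus a Chernoff-plus-union-bound. I would also remark, as the paper does informally, that this is exactly where the assumption $\epsilon\leq 1/4$ (or $\epsilon\ll 1/4$) enters: it guarantees $s\epsilon < s/4$ up to the lower-order $\log m$ slack, which is the precise form of $s_\vect{j}<s/4$ needed to invoke Lemma~\ref{lem:worst_case_adv} and Corollary~\ref{cor:quantile-gaussian}.
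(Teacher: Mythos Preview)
Your proposal is correct and follows essentially the same route as the paper: model $s_\vect{j}$ as a sum of independent $\bern(\epsilon)$ indicators, apply the multiplicative Chernoff bound with a threshold of order $s\epsilon+\log m$, split into the two regimes $s\epsilon\gtrless\log m$, and finish with a union bound over the bins. If anything, your version is slightly more careful than the paper's in tracking the $m^d$ (rather than $m$) many bins in the union bound.
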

\begin{proof}[Proof of Lemma~\ref{lem:uniform_upper_sj}]
For each $\vect{j}$, by Chernoff bound we have \begin{align*}
	\prob\left(s_\vect{j} \ge \left(1+\delta\right)\epsilon s\right) \le \exp\left(\frac{-\delta^2s\epsilon}{2+\delta}\right).
\end{align*}
Choose $\delta = C\left(1+\frac{\log m}{s\epsilon}\right)$ for some large enough $C$.
If $s\epsilon \ge \log m$, we have \begin{align*}
&\prob\left(s_\vect{j} \ge \left(1+\delta\right)\epsilon\right) \\
\le &\exp\left(\frac{-\delta\epsilon s}{2}\right) \\
\le & \exp\left(\frac{-\delta\log m}{2}\right) \\
\le & \frac{1}{C_1 m}
\end{align*}
for some $C_1 \ge 100$.
If $s\epsilon \le \log m$, we have \begin{align*}
&\prob\left(s_\vect{j} \ge \left(1+\delta\right)\epsilon\right) \\
\le &\exp\left(\frac{-\delta\epsilon s}{2}\right) \\
\le & \exp\left(\frac{-\log m}{2}\right) \\
\le & \frac{1}{C_1 m}.
\end{align*}
Now using union bound we obtain the desired result.
\end{proof}


\subsection{Proof of Lemma~\ref{lem:meta_lemma_uniform_mult}}
\label{sec:meta_lemma_proof}
Applying Lemma~\ref{lem:uniform_upper_sj}, using the decomposition Lemma~\ref{lem:fixed_design_median} and then using Corollary~\ref{cor:quantile-gaussian} we directly obtain Lemma~\ref{lem:meta_lemma_uniform_mult}.

\subsection{Proof of Theorem~\ref{thm:l2_upper_bound}}
\label{sec:proof_median_holder}
Consider query point $x$ and let $\vect{j}$ be the local bin $x$ belong to. 
Recall that the local binning median estimate $\hat f(x)$ is equal to $z_{\vect{j}}=\median\{y_\vect{i}\}_{\vect{i} \in \text{ bin }\vect{j}}$.
We then have 
\begin{align}
&\expect{
	(z_\vect{j}-f(x))^2
	}=\expect{
	\left(f(\vect{j}/m)+\eta_\vect{j}+\triangle_\vect{j} - f(x)\right)^2
	}\nonumber\\
&\le  3\expect{\left(f\left(\vect{j}/m\right)-f(x)\right)^2 + \eta_\vect{j}^2+\triangle_\vect{j}^2}.\nonumber
\end{align}
Here the last inequality holds because $(a+b+c)^2\leq 3(a^2+b^2+c^2)$.
Invoking Lemma \ref{lem:worst_case_adv} that upper bounds $\Delta_\vect{j}^2$ and Corollary \ref{cor:quantile-gaussian} that upper bounds $\expect{\eta_\vect{j}^2}$, 
we know that 
\begin{align*}
\expect{(z_\vect{j}-f(x))^2}
\leq C_1\left[\triangle_\vect{j}^2+ \frac{s \epsilon^2 +\log^2 m}{s^2} + \frac{1}{s} \right],
\end{align*}
where $C_1>0$ is an absolute constant. Subsequently, 
\begin{align}
\expect{\|\hat{f}-f\|_2^2} 
 &\leq \sum_{\vect{j}\in [m]^d}\int_{\text{ bin }\vect{j}}\expect{(\hat{f}(x)-f(x))^2}dx \nonumber\\
  &\leq C_2\left[ \frac{\sum_{\vect{j}\in[m]^d}\triangle_\vect{j}^2}{m^d} + \epsilon^2+\frac{1}{s}\right] \nonumber \\
 &\leq C_1\left[ \frac{L^2}{m^{2\beta}} + \epsilon^2+\frac{1}{s}\right].\label{eq:l2-risk}
\end{align} 
for some $C_2 > 0$.
Setting $m\asymp n^{\frac{1}{d+2\beta}}L^{\frac{2}{2\beta+1}}$
we proved Theorem \ref{thm:l2_upper_bound}.

%

\subsection{Proof of Theorem \ref{thm:median_kernel}}

To analyze the kernel smoothing post-processing step, 
we need the following technical lemma, which shows that $K_j^h(\cdot)$ sums to one and is therefore a valid kernel.
\begin{lem}\label{lem:sum_kernel}
For any $x \in \left(h, 1-h\right)$, $\sum_{j=1}^{m}K_j^h(x)=1$.
\end{lem}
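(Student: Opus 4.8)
\textbf{Proof plan for Lemma~\ref{lem:sum_kernel}.}
The plan is to exploit the fact that the intervals $[(j-1)/m, j/m)$ for $j=1,\ldots,m$ partition $[0,1]$, so that summing the defining integrals in Equation~\eqref{eqn:kernel_def} telescopes into a single integral over all of $[0,1]$. Concretely, I would write
\begin{align*}
\sum_{j=1}^{m} K_j^h(x) = \sum_{j=1}^{m}\int_{(j-1)/m}^{j/m}\frac{1}{h}K\!\left(\frac{x-u}{h}\right)du = \int_{0}^{1}\frac{1}{h}K\!\left(\frac{x-u}{h}\right)du,
\end{align*}
the last equality being just additivity of the Lebesgue integral over the partition.

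Next I would perform the change of variables $v = (x-u)/h$, so $du = -h\,dv$, which turns the integral into $\int_{(x-1)/h}^{x/h} K(v)\,dv$. The key observation is that the assumption $x \in (h, 1-h)$ forces $x/h > 1$ and $(x-1)/h < -1$, so the interval of integration $[(x-1)/h,\, x/h]$ contains $[-1,1]$. Since $K$ is supported on $[-1,1]$ by property (A1), we have $\int_{(x-1)/h}^{x/h} K(v)\,dv = \int_{-1}^{1} K(v)\,dv = 1$, again using (A1). This completes the proof.

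There is essentially no hard step here; the only thing to be careful about is the bookkeeping of the integration limits after the substitution and verifying that the condition $x\in(h,1-h)$ is exactly what guarantees the full support of $K$ lies inside the integration window (this is also the reason the theorem statements restrict attention to the interior interval $[c,1-c]$). I would state the substitution and the support containment explicitly and leave the rest as immediate.
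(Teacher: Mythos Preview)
Your proposal is correct and matches the paper's own proof almost exactly: both arguments combine the partition integrals into a single integral, perform the substitution $v=(x-u)/h$, and then invoke the support and normalization property (A1) of $K$ together with the condition $x\in(h,1-h)$ to conclude. The only cosmetic difference is that you sum first and substitute once, whereas the paper substitutes in each term and then telescopes; the content is identical.
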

\begin{proof}[Proof of Lemma~\ref{lem:sum_kernel}]
Recall the definition \begin{align*}
		\sum_{j=1}^mK_{j}^h\left(x\right) = \frac{1}{h}\sum_{j=1}^{m}\int_{\frac{j-1}{m}}^{\frac{j}{m}}K\left(\frac{x-u}{h}\right)du.
	\end{align*}
Setting $v = \frac{x-u}{h}$, we have \begin{align*}
		\sum_{j=1}^mK_{j}^h\left(x\right) = & \frac{1}{h}\sum_{j=1}^{m}\int_{\frac{x-(j-1)/m}{h}}^{\frac{x-j/m}{h}}K(v)(-h)dv \\
		= &\sum_{j=1}^{m} \int_{\frac{(j-1)/m-x}{h}}^{\frac{j/m-x}{h}}K(v) dv \\
		= & \int_{\frac{-x}{h}}^{\frac{1-x}{h}}K(v)dv \\
		= & 1.
\end{align*}
\end{proof}

We are now ready to prove Theorem \ref{thm:median_kernel}.
Let $x_0\in[c,1-c]$ be an interior query point at which estimation of $f(x_0)$ is sought.
By standard bias-variance decomposition, the point-wise mean-square error $\mathbf E[(\check f(x_0)-f(x_0))^2]$
equals
$$
\abs{\expect{\check{f}\left(x_0\right)}-f(x_0)}^2 + \variance\left(\check{f}(x_0)\right).
$$

Recall the decomposition that $z_j=f(j/m)+\eta_j+\Delta_j$,
where $\{z_j\}_{j=1}^m$ are local binning medians used as inputs of the kernel smoothing estimator $\check f(x_0)=\sum_{j=1}^mK_j^h(x_0)z_j$.
By triangle inequality, the bias term $|\expect{\check f(x_0)}-f(x_0)|$ can be upper bounded by
\begin{align*}
 \sum_{j=1}^{m}K_j^h(x_0)\abs{f(x_j)-f(x_0)} + \sum_{j=1}^{m}K_j^h(x_0)\left[\abs{\triangle_j}+\expect{\eta_j}\right].
\end{align*}

The first term is the standard bias term in (non-robust) kernel smoothing. 
Using arguments in~\cite{wang2008effect}, we have 
\begin{align*}
	\sum_{j=1}^{m}K_j^h(x_0)\abs{f(x_j)-f(x_0)} \le \frac{L}{m} + Lh^{\beta}.
\end{align*}
For the other term, by Lemma~\ref{lem:sum_kernel} and Lemma~\ref{lem:meta_lemma_uniform_mult}, we know that there exists an absolute constant $C>0$ such that
\begin{align*}
\sum_{j=1}^{m}K_j^h(x)\left(\abs{\triangle_j}+\expect{\eta_j}\right) \le &C\left[ \max_{j}\max_{(j-1)s<i\leq js}\big| f(i/n)-f(j/m)\big| + \max_j \frac{s_j}{s} \right]\\
\leq &C\left[\frac{L}{m} + \epsilon+\frac{\log m}{s}\right].
\end{align*}
Here the last inequality holds because 
$|f(i/n)-f(j/m)|\leq L/m$ for all $j\in[m]$ and $(j-1)s<i\leq js$ which is implied by $f\in\Lambda(\beta,L)$,
and $s_j\leq O(s\epsilon+\log m)$ thanks to Lemma \ref{lem:uniform_upper_sj}.
Subsequently,
\begin{align*}
\abs{\expect{\check{f}\left(x_0\right)}-f(x_0)} \le C\left[\epsilon +\frac{L}{m} + \frac{\log m}{s} + Lh^{\beta}\right].
\end{align*}

We next consider the variance term $\variance(\eta_j)$.
Since the kernel weights $K_j^h(x_0)$ are statistically independent of $\{\eta_j\}$, 
\begin{align*}
\variance\left(\check{f}(x_0)\right) = \sum_{i=1}^{m}\left(K_j^h(x_0)\right)^2\variance\left(\eta_j\right). 
\end{align*}
Using properties of the kernel $K_j^h(\cdot)$ and invoking Corollary \ref{cor:quantile-gaussian},
we have (again conditioned on $\{s_j\}_{j=1}^m$ and the event $s_j<s/4$ for all $j\in[m]$)
$$
\variance(\check f(x_0)) \leq C\cdot\frac{1}{mh}\cdot\frac{1}{s} = \frac{C}{nh}.
$$
Putting things together, the point-wise mean-square error $\mathbf E[(\check f(x_0)-f(x_0))^2]$ is upper bounded by (with probability 0.9)
\begin{align*}
 C\left[\epsilon^2 + \frac{L^2}{m^2} +\frac{\log^2 m}{s^2} +L^2h^{2\beta} + \frac{1}{nh}\right].
\end{align*}
Setting $m\asymp \sqrt{n}/{\sqrt[4]{\log n}} $, $h\asymp \left(nL^2\right)^{-\left(2\beta+1\right)}$
and integrating over $[c,1-c]$ we complete the proof of Theorem \ref{thm:median_kernel}.

\subsection{Proof of Theorem~\ref{thm:multivariate_lpr}}
To analyze this estimator, for $ x \in \left(h,1-h\right)^d$, define mapping $\psi_{x,h}(z) = \left(1,\psi_{x,h}^1(z),\ldots,\psi_{x,h}^\ell(z)\right) \in \mathbb{R}^D$ with $D = 1+d+\ldots+d^\ell$ where $\psi_{x,h}^j(z) = \left[\Pi_{\ell=1}^jh^{-1}\left(z_{i_\ell}-x_{i_\ell}\right)\right]^d_{i_1,\ldots,i_j=1}$ is the degree-$j$ polynomial mapping from $\mathbb{R}^d$ to $\mathbb{R}^{d_j}$.
Further define $\Psi_{t,h}  \in \mathbb{R}^{n_h \times D}$ aggregated design matrix where $n_h$ is the number of design points in $B_h^{\infty}(x)$.
Using these notations we can write the estimation in a compact form:
\begin{align*}
\hat{f}_h(z) = \psi_{x,h}(z)^\top \left(\Psi_{t,h}^\top \Psi_{t,h}\right)^{-1}Z_{t,h}
\end{align*} where $Z_{t,h} = \left(z_{\vect{j}}\right)_{\vect{j}/m \in B_h^{\infty}(x)}$.

The following Lemma characterizes the key property of the quantity $\psi_{x,h}(z)^\top \left(\Psi_{t,h}^\top \Psi_{t,h}\right)^{-1}$.
\begin{lem}\label{lem:key_lemma_lpr}
For any $x \in \left(0,1\right)^d$, we have 
\[\norm{\psi_{x,h}(x)^\top \left(\Psi_{x,h}^\top \Psi_{x,h}\right)^{-1}}_1 = O\left(1\right).\]
\end{lem}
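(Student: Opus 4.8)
The plan is to show that the $\ell_1$ norm of the vector $v_x^\top := \psi_{x,h}(x)^\top(\Psi_{x,h}^\top\Psi_{x,h})^{-1}$ is bounded by an absolute constant, uniformly in $h$ (and in $x$, at least for interior points). The key realization is that the least-squares weights $w_{\vect{j}} := v_x^\top \psi_{x,h}(\vect{j}/m)$ reproduce polynomials of degree $\le \ell$ exactly, i.e. $\sum_{\vect{j}} w_{\vect{j}} q(\vect{j}/m) = q(x)$ for every $q\in\mathcal{P}_\ell$, and in particular $\sum_{\vect j} w_{\vect j} = 1$. What I actually need, however, is control on $\sum_{\vect j}|w_{\vect j}|$ together with the fact that $\|\psi_{x,h}(x)\|$ is a fixed vector (it equals $(1,0,\dots,0)$ after the rescaling by $h^{-1}$ built into $\psi_{x,h}$), so $\|v_x^\top\|_1$ is controlled once I control $\|(\Psi_{x,h}^\top\Psi_{x,h})^{-1}\|$ from below on its relevant spectrum.

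The concrete steps I would carry out: (1) Introduce the rescaled design points $u_{\vect j} := h^{-1}(\vect{j}/m - x)$ for $\vect j/m \in B_h^\infty(x)$, which all lie in $[-1,1]^d$; under this rescaling $\Psi_{x,h}^\top\Psi_{x,h} = \sum_{\vect j} \phi(u_{\vect j})\phi(u_{\vect j})^\top$ where $\phi$ is the (fixed, $h$-independent) polynomial feature map, and $\psi_{x,h}(x) = \phi(0) = e_1$. (2) Factor out the number $n_h$ of points in the neighborhood: write $\Psi_{x,h}^\top\Psi_{x,h} = n_h \cdot M_h$ where $M_h = \frac{1}{n_h}\sum_{\vect j}\phi(u_{\vect j})\phi(u_{\vect j})^\top$ is an empirical moment matrix of the $u_{\vect j}$'s on a grid that becomes dense in $[-1,1]^d$ as $m,h$ grow (since the grid spacing is $1/(mh)$, which the choice $m\asymp\sqrt n/\sqrt[4]{\log n}$, $h\asymp (nL^2)^{-1/(2\beta+d)}$ makes $o(1)$ for $\beta>1$). (3) Show $M_h \succeq c\, I$ for an absolute constant $c>0$: the limiting matrix $M_\infty = \frac{1}{2^d}\int_{[-1,1]^d}\phi(u)\phi(u)^\top du$ is positive definite because the monomials are linearly independent functions on $[-1,1]^d$, and $M_h \to M_\infty$ entrywise (Riemann sum convergence), so for $mh$ large enough $\lambda_{\min}(M_h)\ge \lambda_{\min}(M_\infty)/2 =: c$. (4) Conclude: $\|v_x^\top\|_1 \le \sqrt D\,\|v_x^\top\|_2 = \sqrt D\, \|e_1^\top (n_h M_h)^{-1}\|_2 \le \frac{\sqrt D}{n_h c}$, and then — since $\ell$ and $d$ are fixed, $D$ is a constant, but $1/n_h$ is \emph{not} $O(1)$ — I instead need to be more careful and keep the $n_h$ with the data vector; the cleaner route is to bound the reproducing weights $w_{\vect j} = \phi(0)^\top (n_h M_h)^{-1}\phi(u_{\vect j})$ and observe $\sum_{\vect j}|w_{\vect j}| \le \frac{1}{n_h c}\sum_{\vect j}\|\phi(u_{\vect j})\|_2 \le \frac{1}{c}\max_{u\in[-1,1]^d}\|\phi(u)\|_2 = O(1)$, which is exactly the statement once one identifies $\|\psi_{x,h}(x)^\top(\Psi_{x,h}^\top\Psi_{x,h})^{-1}\|_1$ with a quantity of the same order as $\sum_{\vect j}|w_{\vect j}|$ after accounting for the $h^{-1}$ scalings (here one uses $h<1$ so the extra powers of $h^{-1}$ only help, or notes they cancel against the $h$-scaling inside $\psi_{x,h}$ of the query point and the design points together).

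The main obstacle I anticipate is step (3) — establishing the \emph{uniform}, $h$-independent (and $x$-independent for $x$ in the interior) lower bound $\lambda_{\min}(M_h)\ge c$. One has to verify that the rescaled local design points $u_{\vect j}$ genuinely fill out $[-1,1]^d$ well enough: the neighborhood $B_h^\infty(x)$ must contain $\asymp (mh)^d$ grid points (which requires $mh\to\infty$, hence uses the specific scalings of $m$ and $h$ and $\beta>1$), and near the boundary of $(0,1)^d$ the neighborhood may be truncated, so either one restricts attention to $x\in(h,1-h)^d$ (as the estimator's definition already does) or one notes that even a fixed-fraction portion of $[-1,1]^d$ suffices for positive-definiteness of the moment matrix. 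Once the Riemann-sum approximation $M_h\to M_\infty$ and $\det M_\infty>0$ are in hand, the rest is bookkeeping with the fixed constant $D$ and the boundedness of $\|\phi\|_2$ on the compact cube.
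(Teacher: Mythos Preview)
Your approach is essentially the paper's: rescale so that $\psi_{x,h}(x)=e_1$, write $\Psi_{x,h}^\top\Psi_{x,h}=n_h M_h$ with $M_h$ a Riemann-sum approximation to the positive-definite moment matrix $\int\phi\phi^\top$ over the cube, and conclude that $\|(\Psi_{x,h}^\top\Psi_{x,h})^{-1}\|_2=O(1/n_h)$. The paper cites an external proposition for the positive-definiteness of the limiting integral; your direct argument via linear independence of monomials is equivalent, and your remark about needing $mh\to\infty$ (and interior $x$) for the Riemann-sum step is exactly the right caveat.

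Your only slip is in step~(4): having obtained $\|v_x\|_1 \le \sqrt{D}\,\|v_x\|_2 \le \sqrt{D}/(n_h c)$, you are already done, since $n_h\ge 1$ and hence $\sqrt{D}/(n_h c)\le \sqrt{D}/c = O(1)$. The assertion that ``$1/n_h$ is not $O(1)$'' is simply false --- perhaps you were thinking of $\Theta(1)$, but the lemma only asks for an upper bound. The subsequent detour through the reproducing weights $w_{\vect j}$ is therefore unnecessary for this lemma (though the bound $\sum_{\vect j}|w_{\vect j}|=O(1)$ is correct and is in fact what one ultimately uses to control the bias of the estimator downstream).
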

\begin{proof}[Proof of Lemma~\ref{lem:key_lemma_lpr}]
First notice that $\norm{\psi_{x,h}(x)}_\infty = O(n_h)$ by definition.
Next, viewing the summation as the approximation to Riemann integral, we have that $\frac{1}{n_h}\left(\Psi_{x,h}^\top \Psi_{x,h}\right) = \int_{\left[0,1\right]^d} \psi_{0,1}(z)\psi_{0,1}(z)^\top d \lambda(z)+ O(\frac{n_h}{m})$ where $\lambda(\cdot)$ is the standard Lebesgue measure.
By proposition 7 of  \cite{wang2018optimization}, we have \[
\int_{\left[0,1\right]^d} \psi_{0,1}(z)\psi_{0,1}(z)^\top d \lambda(z) \succcurlyeq \Omega(1)\mat{I}.
\]
Thus we have $\frac{1}{n_h}\left(\Psi_{x,h}^\top \Psi_{x,h}\right)\succcurlyeq  \Omega(1)\mat{I}$.
Therefore, we can bound the spectral norm $\norm{\left(\Psi_{x,h}^\top \Psi_{x,h}\right)^{-1}}_2 = O\left(n_h\right)$.
Combing these two observations we have $\norm{\psi_{x,h}(x)^\top \left(\Psi_{x,h}^\top \Psi_{x,h}\right)^{-1}}_1 = O\left(1\right)$.
\end{proof}

For any query point $x$, we can write the its function value estimate as \begin{align*}
\hat{f}_h(x) = &\psi_{x,h}(x)^\top \left(\Psi_{x,h}^\top \Psi_{x,h}\right)^{-1}Z_{t,h} \\
= & \psi_{x,h}(x)^\top \left(\Psi_{x,h}^\top \Psi_{x,h}\right)^{-1}\begin{pmatrix}
\ldots \\
f\left(\frac{\vect{j}}{m}\right) + \triangle_\vect{j} + \eta_\vect{j}\\
\ldots
\end{pmatrix} \\
= & \psi_{x,h}(x)^\top \left(\Psi_{x,h}^\top \Psi_{x,h}\right)^{-1}\left[\begin{pmatrix}
\ldots \\
f\left(\frac{\vect{j}}{m}\right) + \triangle_\vect{j} + \left(\eta_\vect{j}-\expect{\eta_{\vect{j}}\right)}\\
\ldots
\end{pmatrix}
+
\begin{pmatrix}
\ldots \\
\triangle_\vect{j} + \expect{\eta_{\vect{j}}}\\
\ldots
\end{pmatrix}
\right]
\end{align*}
For \holder class, we know $\abs{\Delta_j} \le \frac{L}{m}$.
For Sobolev class, since we assume $\frac{\beta-1}{d} \ge \frac{1}{p}$, by Sobolev embedding theorem, we have $\abs{\Delta_j} \le \frac{L}{m}$ as well.
We also know $\abs{\expect{\eta_\vect{j}}} \le C\left(\epsilon + \frac{\log m}{s}\right)$ for all $\vect{j} \in [m]^d$.
Therefore, by \holder inequality, we have \begin{align}
&\abs{\psi_{x,h}(x)^\top \left(\Psi_{x,h}^\top \Psi_{x,h}\right)^{-1}\begin{pmatrix}
\ldots \\
\triangle_\vect{j} + \expect{\eta_{\vect{j}}}\\
\ldots
\end{pmatrix}} \nonumber \\
 \le &\norm{\psi_{x,h}(x)^\top \left(\Psi_{x,h}^\top \Psi_{x,h}\right)^{-1}}_1 \norm{\begin{pmatrix}
\ldots \\
\triangle_\vect{j} + \expect{\eta_{\vect{j}}}\\
\ldots
\end{pmatrix}}_{\infty} \nonumber\\
\le &C_1\left(\frac{L}{m}+\epsilon+\frac{\log m}{s}\right). \label{eqn:lpr_biased}
\end{align} for some constant $C_1 > 0$.

Next,  notice that \begin{align*}
\psi_{x,h}(x)^\top \left(\Psi_{x,h}^\top \Psi_{x,h}\right)^{-1}Z_{t,h}\begin{pmatrix}
\ldots \\
f\left(\frac{\vect{j}}{m}\right) + \triangle_\vect{j} + \left(\eta_\vect{j}-\expect{\eta_{\vect{j}}\right)}\\
\ldots
\end{pmatrix} - f(x) 
\end{align*} is the standard error term for non-parametric estimation with unbiased stochastic noise using local polynomial regression~\citep{nemirovski2000topics}.
Since we have $\variance\left(\eta_j\right) \le \frac{C}{s}$, we obtain the following bound \begin{align}
\int_{[0,1]^d} \abs{\psi_{x,h}(x)^\top \left(\Psi_{x,h}^\top \Psi_{x,h}\right)^{-1}Z_{t,h}\begin{pmatrix}
\ldots \\
f\left(\frac{\vect{j}}{m}\right) + \triangle_\vect{j} + \left(\eta_\vect{j}-\expect{\eta_{\vect{j}}\right)}\\
\ldots
\end{pmatrix} - f(x)}^2 \le C_L n^{-\frac{2\beta}{2\beta+d}} \label{eqn:lpr_unbiased}
\end{align} for some $C_L > 0$ depending on $L$ if we choose  $h\asymp \left(nL^2\right)^{-\frac{1}{2\beta+d}}$.
Lastly, plugging in $m\asymp \sqrt{n}/{\sqrt[4]{\log n}} $ and combining Equation~\eqref{eqn:lpr_biased} and~\eqref{eqn:lpr_unbiased} we obtain the desired result.

\subsection{Proof of Theorem \ref{thm:lower_bound_for_holder}}

The statistical rate $\wp(L)\cdot n^{-2\beta/(2\beta+d)}$ can be established by standard minimax lower bound arguments for (non-robust) nonparametric regression problems (see e.g. \citep{tsybakov2009introduction}).
We shall therefore focus solely on establishing the contamination dependency $\epsilon^2$ in this proof.

Given a function $f$, the observation model can be re-formulated as (the robust version) of a $n$-dimensional Gaussian random vector with mean $\left[f(x_{\vect{i}})\right]$ for $\vect{i} \in [p]^d$ and identity covariance.
	In light of Theorem 5.1 of~\citep{chen2015robust}, we only need to show there exists two functions $f_1,f_2\in\Lambda(\beta,L)$ or $\Sigma\left(\beta,p,L\right)$ such that the total variation of the following two distributions
	\begin{align*}
	D_1 = N(\left[f(x_{\vect{i}})\right],I), \quad D_2 = N(\left[f(x_{\vect{i}})\right],I)
	\end{align*} for $\vect{i} \in [p]^d$
	is upper bounded by $\epsilon/(1-\epsilon)$.
	Then by Pinsker's inequality we have $\|{f_1-f_2}\|_2^2 \asymp  4\text{TV}^2\left(D_1,D_2\right) \ge 4\epsilon^2$.
	The existence of such function pairs $f_1,f_2\in\Lambda(\beta,L)$ or $\Sigma\left(\beta,p,L\right)$ is easily satisfied by choosing two constant function making the total variation equal to $\epsilon/(1-\epsilon)$.
	The theorem is hence proved.

\end{document}